\newtheorem{theorem}{Theorem}
\newtheorem{lemma}[theorem]{Lemma}
\newtheorem{corollary}[theorem]{Corollary}
\theoremstyle{definition}
\newtheorem{example}{Example}
\theoremstyle{remark}
\newtheorem{remark}{Remark}
\newcommand{\boldx}{{\mathbf x}}
\newcommand{\boldzero}{{\mathbf 0}}
\pgfplotsset{compat=1.9}
\begin{document}


\title[Solving Distributed Systems of Equations]{A Kaczmarz Algorithm for Solving Tree Based Distributed Systems of Equations}
\author{Chinmay Hegde}
\address{Electrical and Computer Engineering, Iowa State University, Ames, IA 50011}
\email{chinmay@iastate.edu}
\author{Fritz Keinert}
\address{Department of Mathematics, Iowa State University, 396 Carver Hall, Ames, IA 50011}
\email{keinert@iastate.edu}
\author{Eric S. Weber}
\address{Department of Mathematics, Iowa State University, 396 Carver Hall, Ames, IA 50011}
\email{esweber@iastate.edu}
\subjclass[2000]{Primary: 65F10, 15A06; Secondary 68W15, 41A65}
\keywords{Kaczmarz method, linear equations, least squares, distributed optimization}
\date{\today}
\begin{abstract}
The Kaczmarz algorithm is an iterative method for solving systems of linear equations.  We introduce a modified Kaczmarz algorithm for solving systems of linear equations in a distributed environment, i.e.~the equations within the system are distributed over multiple nodes within a network.  The modification we introduce is designed for a network with a tree structure that allows for passage of solution estimates between the nodes in the network.  We prove that the modified algorithm converges under no additional assumptions on the equations.  We demonstrate that the algorithm converges to the solution, or the solution of minimal norm, when the system is consistent.  We also demonstrate that in the case of an inconsistent system of equations, the modified relaxed Kaczmarz algorithm converges to a weighted least squares solution as the relaxation parameter approaches $0$.
\end{abstract}
\maketitle




\section{Introduction}

The Kaczmarz method (\cite{K-37}, 1937) is an iterative algorithm for
solving a system of linear equations $A \vec{x} = \vec{b}$, where $A$ is an $m
\times k$ matrix.  Written out, the equations are   $ \vec{a}_i \cdot \vec{x}  = b_i$ for $i=1,\ldots,m$,  where $\vec{a}_i^{T}$ is the $i$th row of the matrix $A$, and we take the dot product to be linear in both variables.
Given a solution guess $\vec{x}^{(n)}$ and an equation number $i$, we calculate
$r_i = b_i -  \vec{a}_i \cdot \vec{x}^{(n)} $ (the residual for equation $i$), and define
\begin{equation}\label{Eq:update}
  \vec{x}^{(n+1)} = \vec{x}^{(n)} + \frac{r_i}{\|\vec{a}_i\|^2} \vec{a}_i.
\end{equation}
This makes the residual of $\vec{x}^{(n+1)}$ in equation $i$ equal to 0. Here and elsewhere, $\| \cdot \|$ is the usual Euclidean ($\ell^{2}$) norm.  We iterate repeatedly through all equations (i.e. we consider $\lim_{n \to \infty} \vec{x}^{(n)}$ where $n+1 \equiv i \mod m$).  Kaczmarz proved that if the system of equations has a unique solution, then $\vec{x}^{(n)}$ converges to that solution.  Later, it was proved in \cite{T-71} that if the system is consistent (but the solution is not unique), then the sequence converges to the solution of minimal norm.  Likewise, it was proved in  \cite{EHL-81,N-86} that if inconsistent, a relaxed version of the algorithm can provide approximations to a weighted least-squares solution.

Obtaining the $n+1$ estimate requires knowledge only of the $i$-th equation ($n+1 \equiv i \mod m$ as above) and the $n$-th estimate.  We suppose that the equations are indexed by the nodes of a tree, representing a network in which the equations are distributed over many nodes.  In our distributed Kaczmarz algorithm, solution estimates can only be communicated when there exists an edge between the nodes.  The estimates for the solution will disperse through the tree, which results in several different estimates of the solution.  When these estimates then reach the leaves of the tree, they are pooled together into a single estimate.  Using this single estimate as a seed, the process is repeated, with the goal that the sequence of single estimates will converge to the true solution.  We illustrate the dispersion and pooling processes in Figure \ref{Fig:eqs}.

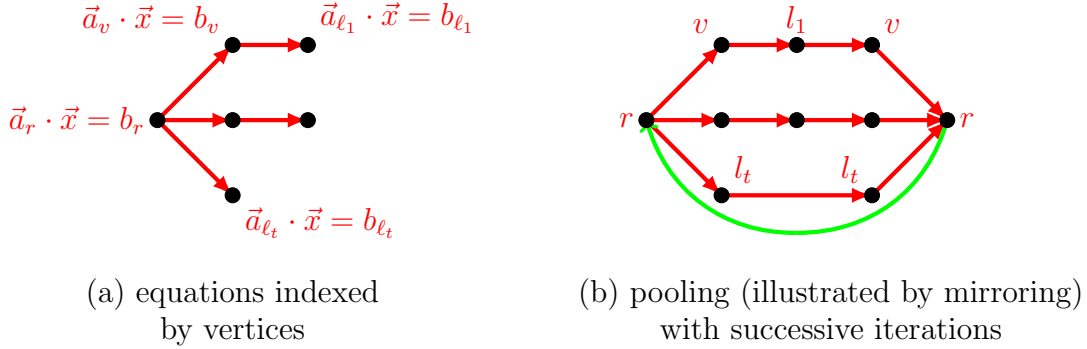
\begin{figure}[h]
  \centering
  \begin{tikzpicture}
    
    \begin{scope}[xshift=1.0cm,yshift=0.0cm]
    \coordinate (Origin)   at (0,0);

   \draw [ultra thick,-latex,red] (0,0) node [left] {$\vec{a}_{r} \cdot \vec{x} = b_{r}$}
        -- (1,1) node [above left] {$\vec{a}_{v} \cdot \vec{x} = b_{v}$ };
   \draw [ultra thick,-latex,red] (0,0) node [above right] {}
        -- (1,0) ;
   \draw [ultra thick,-latex,red] (0,0)
        -- (1,-1) node [below right] {$\vec{a}_{\ell_{t}} \cdot \vec{x} = b_{\ell_{t}}$};
   \draw [ultra thick,-latex,red] (1,1) node [above right] {}
        -- (2,1) node [above right] {$\vec{a}_{\ell_{1}} \cdot \vec{x} = b_{\ell_{1}}$ };
   \draw [ultra thick,-latex,red] (1,0)
        -- (2,0) node [above left] {};

   \node[draw,circle,inner sep=2pt,fill] at (0,0) {};
   \node[draw,circle,inner sep=2pt,fill] at (1,1) {};
   \node[draw,circle,inner sep=2pt,fill] at (1,0) {};
   \node[draw,circle,inner sep=2pt,fill] at (1,-1) {};
   \node[draw,circle,inner sep=2pt,fill] at (2,0) {};
   \node[draw,circle,inner sep=2pt,fill] at (2,1) {};

\end{scope}



\begin{scope}[xshift=7.5cm,yshift=0.0cm]
    \coordinate (Origin)   at (0,0);

   \draw [ultra thick,-latex,red] (0,0) node [left] {$r$}
        -- (1,1) node [above left] {$v$};
   \draw [ultra thick,-latex,red] (0,0) node [above right] {}
        -- (1,0) ;
   \draw [ultra thick,-latex,red] (0,0)
        -- (1,-1) node [above right] {$l_{t}$};
   \draw [ultra thick,-latex,red] (1,1) node [above right] {}
        -- (2,1) node [above] {$l_{1}$} ;
   \draw [ultra thick,-latex,red] (1,0)
        -- (2,0) node [above left] {};
   \draw [ultra thick,-latex,red] (2,1) node [below right] {}
        -- (3,1) node [above right] {$v$};
   \draw [ultra thick,-latex,red] (3,1) node [above right] {}
        -- (4,0) node [right] {$r$} ;
   \draw [ultra thick,-latex,red] (2,0)
        -- (3,0) node [above left] {};
   \draw [ultra thick,-latex,red] (3,0)
        -- (4,0) node [above left] {};
   \draw [ultra thick,-latex,red] (3,-1)
        -- (4,0) node [above left] {};
   \draw [ultra thick,-latex,red] (1,-1)
        -- (3,-1) node [above left] {$l_{t}$};

   \draw[ultra thick,green,-] (4,0) to[bend left=37.5] (2,-1.5);
   \draw[ultra thick,green,->] (2,-1.5) to[bend left=37.5] (0,0);

   \node[draw,circle,inner sep=2pt,fill] at (0,0) {};
   \node[draw,circle,inner sep=2pt,fill] at (1,1) {};
   \node[draw,circle,inner sep=2pt,fill] at (1,0) {};
   \node[draw,circle,inner sep=2pt,fill] at (1,-1) {};
   \node[draw,circle,inner sep=2pt,fill] at (2,0) {};
   \node[draw,circle,inner sep=2pt,fill] at (2,1) {};
   \node[draw,circle,inner sep=2pt,fill] at (3,1) {};
   \node[draw,circle,inner sep=2pt,fill] at (3,0) {};
   \node[draw,circle,inner sep=2pt,fill] at (3,-1) {};
   \node[draw,circle,inner sep=2pt,fill] at (4,0) {};

\end{scope}

\tikzstyle{la}=[anchor=south];
\node [la] (ll) at (2,-3.2) { \begin{tabular}{c} (a) equations indexed  \\  by vertices \end{tabular}};
\node [la,right=2cm of ll] (lm) { \begin{tabular}{c} (b) pooling (illustrated by mirroring) \\ with successive iterations \end{tabular} } ;


 \end{tikzpicture}
  \caption{Illustration of equations indexed by nodes of the tree.}
  \label{Fig:eqs}
\end{figure}

\subsection{Notation}
For linear transformations $T$, we denote by $\mathcal{N}(T)$ and $\mathcal{R}(T)$ the kernel (nullspace) and range, respectively.

As mentioned previously, our notation is that the dot product of two vectors $\vec{x} \cdot \vec{z} = \sum_{k} x_{k} z_{k}$ is linear in both variables.  We use $\langle \cdot, \cdot \rangle$ to denote the inner product on $\mathbb{C}^d$ which is sesquilinear.  In the sequel, we will use the linear transformation notation (rather than dot product notation):
\begin{equation} \label{Eq:linear-eq}
S_{\vec{a}} : \mathbb{C}^{d} \to \mathbb{C} : \vec{z} \mapsto \vec{a} \cdot \vec{z}.
\end{equation}
When the vector $\vec{a} = \vec{a}_{i}$ corresponds to a row of the matrix $A$ indexed by a natural number $i$, or when $\vec{a} = \vec{a}_{v}$ corresponds to a row of the matrix $A$ indexed by a node $v$, we will denote the transformation in Equation (\ref{Eq:linear-eq}) by $S_{i}$ or $S_{v}$, respectively.  We use $P_{v}$ to denote the linear projection onto $\mathcal{N}(S_{v})$:
\begin{equation} \label{Eq:linear-proj}
P_{v}( \vec{z}) =  \left( I - S_{v}^{*} \left(S_{v} S_{v}^{*}\right)^{-1} S_{v} \right) (\vec{z})
\end{equation}
and $Q_{v}$ to denote the affine projection onto the linear manifold $S_{v} (\vec{z}) = b_{v}$:
\begin{equation} \label{Eq:affine-proj}
Q_{v} (\vec{z}) = P_{v} (\vec{z}) + \vec{h}_{v}
\end{equation}
where $\vec{h}_{v}$ is the vector that satisfies $S_{v} (\vec{h}_{v}) = b_{v}$ and is orthogonal to $\mathcal{N}(S_{v})$.  

A tree is a connected graph with no cycles.  We denote arbitrary nodes (vertices) of a tree by $v$, $u$.  Our tree will be rooted; the root of the tree is denoted by $r$.  Following the notation from MATLAB, when $v$ is on the path from $r$ to $u$, we will say that $v$ is a predecessor of $u$ and write $u \prec v$.  Conversely, $u$ is a successor of $v$.  By immediate successor of $v$ we mean a successor $u$ such that there is an edge between $v$ and $u$ (this is referred to as a \emph{child} in graph theory parlance \cite{west1996graph}).  Similarly, $v$ is an immediate predecessor (i.e. \emph{parent}).  We denote the set of all immediate successors of node $v$ by $\mathcal{C}(v)$.  A node without a successor is called a leaf; leaves of the tree are denoted by $\ell$.  We will denote the set of all leaves by $\mathcal{L}$.  Often we will have need to enumerate the leaves as $\ell_{1}, \dots, \ell_{t}$, hence $t$ denotes the number of leaves.  


A weight $w$ is a nonnegative function on the edges of the tree;  we denote this by $w(u,v)$, where $u$ and $v$ are nodes that have an edge between them.  We assume $w(u,v) = w(v,u)$, though we will typically write $w(u,v)$ when $u \prec v$.  When $u \prec v$, but $u$ is not a immediate successor, we write
\begin{equation} \label{Eq:weight-path}
w(u,v) := \prod_{j=1}^{J-1} w(u_{j+1}, u_{j})
\end{equation}
where $u = u_{1} , \dots, u_{J} = v$ is a path from $u$ to $v$.

When the system of equations $A \vec{x} = \vec{b}$ has a unique solution, we will denote this by $\vec{x}^{S}$.  When the system is consistent but the solution is not unique, we denote the solution of minimal norm by $\vec{x}^{M}$, which is given by
\begin{equation} \label{Eq:soln-min-norm}
 \vec{x}^{M} = \text{argmin } \{ \| \vec{x} \| : A \vec{x} = \vec{b} \}.
\end{equation}

\subsection{The Distributed Kaczmarz Algorithm}
The iteration begins with an estimate, say $\vec{x}^{(n)}$ at the root of the tree (we denote this by $\vec{x}^{(n)}_{r}$).  Each node $u$ receives  from its immediate predecessor $v$ an input estimate $\vec{x}^{(n)}_{v}$ and generates a new estimate via the Kaczmarz update:
\begin{equation} \label{Eq:tree-update}
 \vec{x}^{(n)}_{u} = \vec{x}^{(n)}_{v} + \dfrac{r_{u} (\vec{x}^{(n)}_{v}) }{\| \vec{a}_{u} \|^2 } \vec{a}_{u},
\end{equation}
where the residual is given by 
\begin{equation} \label{Eq:residual}
r_{u}(\vec{x}^{(n)}_{v}) := b_{u} - S_{u} \vec{x}_{v}^{(n)}.
\end{equation} 
Node $u$ then passes this estimate to all of its immediate successors, and the process is repeated recursively.   We refer to this as the \emph{dispersion stage}.   Once this process has finished, each leaf $\ell$ of the tree now possesses an estimate: $\vec{x}^{(n)}_{\ell}$.

The next stage, which we refer to as the \emph{pooling stage}, proceeds as follows.  For each leaf, set $\vec{y}^{(n)}_{\ell} = \vec{x}^{(n)}_{\ell}$.  Each node  $v$ receives as input the several estimates $y^{(n)}_{u}$ from all immediate successors $u$, and calculates an updated estimate as:
\begin{equation} \label{Eq:simple-back}
\vec{y}^{(n)}_{v} = \sum_{u \in \mathcal{C}(v)} w(u,v) \vec{y}^{(n)}_{u},
\end{equation}
subject to the constraints that $w(u,v) > 0$ when $u \in \mathcal{C}(v)$ and $\sum_{u \in \mathcal{C}(v) } w(u,v) = 1$.  This process continues until reaching the root of the tree, resulting in the estimate $\vec{y}^{(n)}_{r}$.

We set $\vec{x}^{(n+1)} = \vec{y}^{(n)}_{r}$, and repeat the iteration.  The updates in the dispersion stage (Equation \ref{Eq:tree-update}) and pooling stage (Equation \ref{Eq:simple-back}) are illustrated in Figure \ref{Fig:updates}.

\begin{figure}[h]
  \centering
  \begin{tikzpicture}
    \coordinate (Origin)   at (0,0);

   \draw [ultra thick,-latex,red] (0,0) node [left] {$\vec{x}^{(n)}_{r}$}
        -- (1,1) node [above left]  {$\vec{x}^{(n)}_{v}$ };
   \draw [ultra thick,-latex,red] (0,0) node [above right] {}
        -- (1,0) ;
   \draw [ultra thick,-latex,red] (0,0)
        -- (1,-1) node [below left] {$\vec{x}^{(n)}_{\ell_{t}}$};
   \draw [ultra thick,-latex,red] (1,1) node [above right] {}
        -- (2,1) node [above right] {$\vec{x}^{(n)}_{\ell_{1}}$ };
   \draw [ultra thick,-latex,red] (1,0)
        -- (2,0) node [above left] {};

   \node[draw,circle,inner sep=2pt,fill] at (0,0) {};
   \node[draw,circle,inner sep=2pt,fill] at (1,1) {};
   \node[draw,circle,inner sep=2pt,fill] at (1,0) {};
   \node[draw,circle,inner sep=2pt,fill] at (1,-1) {};
   \node[draw,circle,inner sep=2pt,fill] at (2,0) {};
   \node[draw,circle,inner sep=2pt,fill] at (2,1) {};



\begin{scope}[xshift=7.5cm,yshift=0.0cm]
    \coordinate (Origin)   at (0,0);

   \draw [ultra thick,-latex,red] (0,0) node [below right] {}
        -- (1,1) node [above right] {};
   \draw [ultra thick,-latex,red] (0,0) node [above right] {}
        -- (1,0) ;
   \draw [ultra thick,-latex,red] (0,0)
        -- (1,-1) node [above left] {};
   \draw [ultra thick,-latex,red] (1,1) node [above right] {}
        -- (2,1) ;
   \draw [ultra thick,-latex,red] (1,0)
        -- (2,0) node [above left] {};
   \draw [ultra thick,-latex,red] (2,1) node [below right] {}
        -- (3,1) node [above right] {};
   \draw [ultra thick,-latex,red] (3,1) node [above right] {$\vec{y}^{(n)}_{v}$}
        -- (4,0) ;
   \draw [ultra thick,-latex,red] (2,0)
        -- (3,0) node [above left] {};
   \draw [ultra thick,-latex,red] (3,0)
        -- (4,0) node [right] {$\vec{y}^{(n)}_{r}$};
   \draw [ultra thick,-latex,red] (3,-1) node [above left] {$\vec{y}^{(n)}_{\ell_{t}}$}
        -- (4,0) node [above left] {};
   \draw [ultra thick,-latex,red] (1,-1)
        -- (3,-1) node [above left] {};

   \draw[ultra thick,green,-] (4,0) to[bend left=37.5] (2,-1.5);
   \draw[ultra thick,green,->] (2,-1.5) to[bend left=37.5] (0,0) node [left] {$\vec{x}^{(n+1)}_{r}$};

   \node[draw,circle,inner sep=2pt,fill] at (0,0) {};
   \node[draw,circle,inner sep=2pt,fill] at (1,1) {};
   \node[draw,circle,inner sep=2pt,fill] at (1,0) {};
   \node[draw,circle,inner sep=2pt,fill] at (1,-1) {};
   \node[draw,circle,inner sep=2pt,fill] at (2,0) {};
   \node[draw,circle,inner sep=2pt,fill] at (2,1) {};
   \node[draw,circle,inner sep=2pt,fill] at (3,1) {};
   \node[draw,circle,inner sep=2pt,fill] at (3,0) {};
   \node[draw,circle,inner sep=2pt,fill] at (3,-1) {};
   \node[draw,circle,inner sep=2pt,fill] at (4,0) {};

\end{scope}

\tikzstyle{la}=[anchor=south];
\node [la] (ll) at (1.0,-3.2) { \begin{tabular}{c} (a) updates disperse \\ through nodes  \end{tabular}};
\node [la,right=4cm of ll] (lm) { \begin{tabular}{c} (b) updates pool and \\ \ pass to next iteration \end{tabular} } ;


 \end{tikzpicture}
  \caption{Illustration of updates in the distributed Kaczmarz algorithm with measurements indexed by nodes of the tree.}
  \label{Fig:updates}
\end{figure}
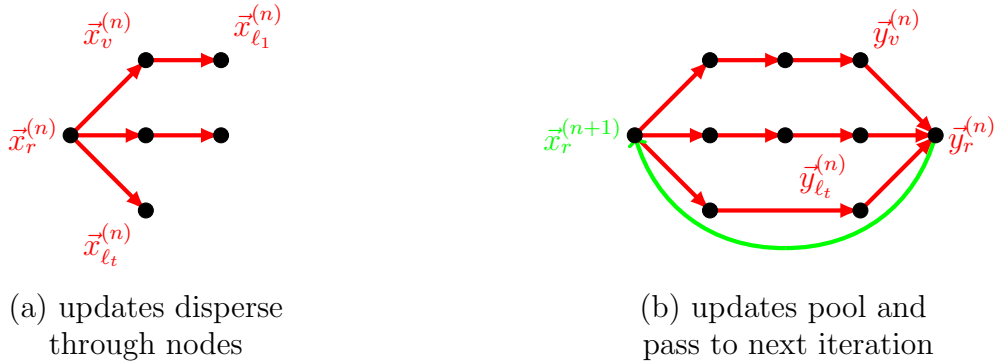

\subsection{Related Work}

The Kaczmarz method was originally introduced in (\cite{K-37}, 1937). It became popular with the introduction of Computer Tomography, under the name of ART (Algebraic Reconstruction Technique).  ART added non-negativity and other constraints to the standard algorithm~\cite{GBH-70}. Other variations on the Kaczmarz method allowed for relaxation parameters \cite{T-71}, re-ordering equations to speed up convergence~\cite{HS-78}, or considering block versions of the Kaczmarz method with relaxation matrices $\Omega_i$~\cite{EHL-81}.   Relatively recently, choosing the next equation randomly has been shown to dramatically improve the rate of convergence of the algorithm \cite{SV-09a,NT14a,NZZ15a}.  Moreover, this randomized version of the Kaczmarz algorithm has been shown to comparable to the gradient descent method \cite{NSW16a}.  Our version of the Kaczmarz method differs from these in that the next equation cannot be chosen randomly or otherwise, since the ordering of the equations is determined \emph{a priori} by the network topology.

Our version is motivated by the situation in which the equations (or measurements) are distributed over a network.  Distributed estimation problems have a long history in applied mathematics, control theory, and machine learning.   At a high level, similar to our approach, they all involve averaging local copies of the unknown parameter vector interleaved with update steps~\cite{tsitsiklis1986distributed,xiao2007distributed,Shah-2008,boyd2011distributed,nedic2009distributed,johansson2009randomized,yuan2016convergence,sayed2014adaptation,zhang2018compressed,scaman2018optimal}.  One common form of the parameter estimation problem involves posing it as a \emph{consensus} problem, where the goal is for nodes in a given graph to arrive at a common solution, assuming that no exchange of measurements takes place and only estimates are shared across neighbors. Computations are
often not synchronized, and network connections may be unstable. Computations done with gossip methods are usually quite simple, such as computing averages, and converge only slowly. 

Following~\cite{yuan2016convergence}, a consensus problem takes the following form.  Consider the problem of minimizing:
\begin{equation*}
F(\vec{x}) = \sum_{v=1}^m f_v(\vec{x}),
\end{equation*}
where $f_v$ is a function that is known (and private) to node $v$ in the graph. Then, one can solve this minimization problem using decentralized gradient descent, where each node updates its estimate of $\vec{x}$ (say $\vec{x}_{v}$) by combining the average of its neighbors with the negative gradient of its local function $f_v$:
\begin{equation*}
\vec{x}_{v}^{(n+1)} = \frac{1}{\text{ deg } v }\sum_{u} m(v,u) \vec{x}_{u}^{(n)} - \omega \nabla f_v (\vec{x}_{u}^{(n)}) ,
\end{equation*}
where $M = (m(v,u)) \in \{0,1\}^{m \times m}$ represents the adjacency matrix of the graph. Specializing $f_v (\vec{x}) = c_{v} (b_{v} - \vec{a}_{v} \cdot \vec{x})^2$ yields our least-squares estimation problem that we establish in Theorem \ref{Th:omega-ic} (where $c_{v}$ is a fixed weight for each node).

However, our version of the Kaczmarz method differs from previous work in a few aspects: (i) we assume a specific (tree) topology; (ii) our updates are asynchronous (the update time for each node is a function of its distance from the root); and (iii) as we will emphasize in Theorem \ref{Th:omega-ic}, we make no strong convexity assumptions.

On the other end of the spectrum are algorithms that distribute a computational task over many processors arranged in a fixed network. These algorithms are usually considered in the context of parallel processing, where the nodes of the graph represent CPUs in a highly parallelized computer. This setup can handle large computational tasks, but the problem must be amenable to being broken into independent pieces. See \cite{BT-97} for an overview.

The algorithm we are considering does not really fit either of those categories. It requires more structure than the gossip algorithms, but each node depends on results from other nodes, more than the usual distributed algorithms.

This was pointed out in \cite{BT-97}. For iteratively solving a system of linear equations, an SOR variant of the Jacobi method is easy to parallelize; standard SOR, which is a variation on Gauss-Seidel, is not. The authors also consider what they call the {\em Reynolds method}, which is similar to a Kaczmarz method with all equations being updated simultaneously. Again, this method is easy to parallelize. A sequential version called RGS (Reynolds Gauss-Seidel) can only be parallelized in certain settings, such as the numerical solution of PDEs.

A distributed version of the Kaczmarz algorithm was introduced in \cite{kamath2015distributed}.  The main ideas presented there are very similar to ours:  updated estimates are obtained from prior estimates using the Kaczmarz update with the equations that are available at the node, and distributed estimates are averaged together at a single node (which the authors refer to as a fusion center, for us it is the root of the tree).  In \cite{kamath2015distributed}, the convergence analysis is limited to the case of consistent systems of equations, and inconsistent systems are handled by Tikhonov regularization \cite{HHLL79a,Hansen2010} rather than by varying the relaxation parameter.

Finally, the Kaczmarz algorithm has been proposed for online processing of data in  \cite{HLH80a,chi2016kaczmarz}.   In these papers, the processing is online, so neither distributed nor parallel.

\section{Analysis of the Kaczmarz Algorithm for Tree Based Distributed Systems of Equations}

In this section, we will demonstrate that the Kaczmarz algorithm for tree based equations as defined in Equations (\ref{Eq:tree-update}) and (\ref{Eq:simple-back}) converges.  We consider three cases separately: (i) the system is consistent and the solution is unique; (ii) the system is consistent but there are many solutions; and (iii) the system is inconsistent.  In subsection \ref{ssec:unique}, we prove that for case (i) the algorithm converges to the solution, and in subsection \ref{ssec:consistent}, we prove that for case (ii) the algorithm converges to the solution of minimal norm.  Also in subsection \ref{ssec:consistent}, we introduce the relaxed version of the update in Equation (\ref{Eq:tree-update}).  We prove that for every relaxation parameter $\omega \in (0,2)$, the algorithm converges to the solution of minimal norm.  Then in subsection \ref{ssec:inconsistent}, we prove that for case (iii) the algorithm converges to a generalized solution $\vec{x}(\omega)$ which depends on $\omega$, and $\vec{x}(\omega)$ converges to a weighted least-squares solution as $\omega \to 0$.

\subsection{Systems with Unique Solutions} \label{ssec:unique}


For our analysis, we need to trace the estimates through the tree.  Suppose that the tree has $t$ leaves; for each leaf $\ell$, let $p_{\ell} - 1$ denote the length of the path between the root $r$ and the leaf $\ell$.  We will denote the vertices on the path from $r$ to $\ell$ by $r = (\ell, 1), (\ell, 2) , \dots, (\ell, p_{\ell}) = \ell$.  During the dispersion stage, we have for $p = 2, \dots , p_{\ell}$:
\[ \vec{x}^{(n)}_{\ell, p} = \vec{x}^{(n)}_{\ell, p-1} + \left( \dfrac{ r_{\ell, p} ( \vec{x}^{(n)}_{\ell, p-1} ) }{ \| \vec{a}_{\ell, p} \|^2 }  \right)\vec{a}_{\ell, p}. \]

Then at the beginning of the pooling stage, we have the estimates $\vec{y}^{(n)}_{\ell}  := \vec{x}^{(n)}_{\ell}$ (we denote $\vec{x}^{(n)}_{\ell} := \vec{x}^{(n)}_{\ell, p_{\ell}}$ and $\vec{y}^{(n)}_{\ell} := \vec{y}^{(n)}_{\ell, p_{\ell}}$).  These estimates then pool back at the root as follows (the proof is a straightforward induction argument):
\begin{lemma} \label{L:pooling}
The estimate at the root at the end of the pooling stage is given by:
\begin{equation*}
\vec{y}^{(n)}_{r} = \sum_{\ell \in \mathcal{L} } w(\ell, r) \vec{y}^{(n)}_{\ell}.
\end{equation*}
\end{lemma}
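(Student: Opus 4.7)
The plan is to prove a slightly stronger statement by induction on the height of the subtree. Specifically, for every node $v$ let $\mathcal{L}(v)$ denote the set of leaves $\ell$ that are successors of $v$ (i.e.\ $\ell \prec v$), and let me show
\begin{equation*}
\vec{y}^{(n)}_{v} = \sum_{\ell \in \mathcal{L}(v)} w(\ell, v)\, \vec{y}^{(n)}_{\ell}.
\end{equation*}
Taking $v = r$ then yields the lemma, since every leaf is a successor of the root.

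For the base case, consider a leaf $v = \ell$. Then $\mathcal{L}(\ell) = \{\ell\}$, the path from $\ell$ to itself has length zero, so by the convention of the empty product in (\ref{Eq:weight-path}) we have $w(\ell,\ell)=1$, and the claim reduces to the initialization $\vec{y}^{(n)}_{\ell} = \vec{y}^{(n)}_{\ell}$.

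For the inductive step, assume the formula holds for every immediate successor $u \in \mathcal{C}(v)$. By the pooling update (\ref{Eq:simple-back}) and the inductive hypothesis,
\begin{equation*}
\vec{y}^{(n)}_{v} \;=\; \sum_{u \in \mathcal{C}(v)} w(u,v)\, \vec{y}^{(n)}_{u}
\;=\; \sum_{u \in \mathcal{C}(v)} w(u,v) \sum_{\ell \in \mathcal{L}(u)} w(\ell,u)\, \vec{y}^{(n)}_{\ell}.
\end{equation*}
The key algebraic point is the multiplicativity of $w$ along paths: if $\ell \prec u \prec v$, then concatenating the path from $\ell$ to $u$ with the edge from $u$ to $v$ gives the path from $\ell$ to $v$, and the product definition in (\ref{Eq:weight-path}) yields $w(\ell,u)\, w(u,v) = w(\ell,v)$. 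Since the sets $\mathcal{L}(u)$ for $u \in \mathcal{C}(v)$ partition $\mathcal{L}(v)$ (every leaf below $v$ sits below exactly one child of $v$, as the graph is a tree), interchanging the order of summation gives
\begin{equation*}
\vec{y}^{(n)}_{v} \;=\; \sum_{u \in \mathcal{C}(v)} \sum_{\ell \in \mathcal{L}(u)} w(\ell,v)\, \vec{y}^{(n)}_{\ell}
\;=\; \sum_{\ell \in \mathcal{L}(v)} w(\ell,v)\, \vec{y}^{(n)}_{\ell},
\end{equation*}
completing the induction.

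No step really presents an obstacle: this is a bookkeeping argument, and the only thing that needs some care is matching the convention in (\ref{Eq:weight-path}) at the base case (so that $w(\ell,\ell)=1$) and invoking the tree property to guarantee the partition of $\mathcal{L}(v)$ by the children's leaf-sets, which is exactly where the no-cycle assumption is used.
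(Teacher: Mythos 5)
Your proof is correct and is precisely the ``straightforward induction argument'' the paper alludes to without writing out: induction over subtrees, using the multiplicativity $w(\ell,u)\,w(u,v)=w(\ell,v)$ from (\ref{Eq:weight-path}) and the fact that the children's leaf-sets partition $\mathcal{L}(v)$. Nothing is missing.
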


Note that also by induction, we have that 
\begin{equation} \label{Eq:total-weights}
\sum_{\ell \in \mathcal{L} } w(\ell, r) = 1.
\end{equation}

\begin{theorem} \label{Th:unique-soln}
Suppose that the equation $A \vec{x} = \vec{b}$ has a unique solution, denoted by $\vec{x}^{S}$.  There exists a constant $\alpha < 1$, such that
\[ \| \vec{x}^{S} - \vec{x}^{(n+1)} \| \leq \alpha \| \vec{x}^{S} - \vec{x}^{(n)} \|. \]
Consequently,
\[ \lim_{n \to \infty} \vec{x}^{(n)} = \vec{x}^{S}, \]
and the convergence is linear in order.
\end{theorem}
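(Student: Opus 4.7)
The plan is to show that one outer iteration acts on the error $\vec{e}^{(n)} := \vec{x}^{(n)} - \vec{x}^S$ as multiplication by a fixed linear operator $L$, and then to verify $\|L\| =: \alpha < 1$. The whole argument is essentially linear algebra combined with the rigidity of orthogonal projections onto hyperplanes.

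First I would unwind the dispersion stage. Comparing Equation (\ref{Eq:tree-update}) with Equations (\ref{Eq:linear-proj}) and (\ref{Eq:affine-proj}), the update at node $(\ell, p)$ is precisely the affine projection $Q_{\ell, p}$ applied to the parent's estimate. Thus on each root-to-leaf path, $\vec{x}^{(n)}_\ell = T_\ell(\vec{x}^{(n)})$ where $T_\ell := Q_{\ell, p_\ell} \circ \cdots \circ Q_{\ell, 2}$. Since $\vec{x}^S$ satisfies every equation, each $Q_{\ell,p}$ fixes $\vec{x}^S$, and hence so does $T_\ell$. The linear part of $T_\ell$ is $L_\ell := P_{\ell, p_\ell} \circ \cdots \circ P_{\ell, 2}$, so
\[
\vec{x}^{(n)}_\ell - \vec{x}^S = L_\ell \vec{e}^{(n)}.
\]
Combining this with Lemma \ref{L:pooling} and Equation (\ref{Eq:total-weights}) yields $\vec{e}^{(n+1)} = L\vec{e}^{(n)}$, where $L := \sum_{\ell \in \mathcal{L}} w(\ell, r) L_\ell$.

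Next I would establish $\|L\| < 1$. Each $P_{\ell, p}$ is an orthogonal projection onto the hyperplane $\mathcal{N}(S_{\ell, p})$, so $\|L_\ell\| \leq 1$; since $w(\ell, r) > 0$ for every leaf with $\sum_\ell w(\ell, r) = 1$, the triangle inequality gives $\|L\vec{z}\| \leq \sum_\ell w(\ell, r) \|L_\ell \vec{z}\| \leq \|\vec{z}\|$. The main obstacle is the equality case. If $\|L\vec{z}\| = \|\vec{z}\|$ for some $\vec{z}$, then because $\{w(\ell, r)\}$ is a strictly positive probability weighting, every summand must be saturated, i.e.\ $\|L_\ell \vec{z}\| = \|\vec{z}\|$ for all $\ell$. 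A short telescoping argument along the path, using the fact that an orthogonal projection $P_{\ell, p}$ preserves the norm of a vector if and only if that vector already lies in $\mathcal{N}(S_{\ell, p})$, then forces $\vec{z} \in \bigcap_{p=2}^{p_\ell} \mathcal{N}(S_{\ell, p})$ for every leaf $\ell$. Taking the intersection over all leaves, and observing that every non-root vertex of the tree lies on some root-to-leaf path, I conclude that $\vec{z}$ is annihilated by every row of $A$, so $\vec{z} \in \mathcal{N}(A) = \{0\}$ by uniqueness of $\vec{x}^S$. Compactness of the unit sphere in finite dimensions then promotes the pointwise strict inequality to $\alpha := \|L\| < 1$.

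Iterating the contraction $\|\vec{e}^{(n+1)}\| \leq \alpha \|\vec{e}^{(n)}\|$ gives $\|\vec{e}^{(n)}\| \leq \alpha^n \|\vec{e}^{(0)}\| \to 0$, which is linear convergence of order one. I expect the most delicate point to be the telescoping step showing that norm preservation along the composed path forces $\vec{z}$ into the intersection of all the nullspaces on that path; once this rigidity of orthogonal projections is in hand, the rest of the argument is routine.
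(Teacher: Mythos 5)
Your argument is correct and is essentially the paper's own proof: the paper likewise writes $\vec{x}^{S}-\vec{x}^{(n)}_{\ell}$ as a composition of orthogonal projections applied to the error, observes that equality in the resulting norm bound for every leaf forces $S_{v}(\vec{x}^{S}-\vec{x}^{(n)})=0$ at every node and hence $\vec{x}^{(n)}=\vec{x}^{S}$ by uniqueness, and then invokes continuity and compactness to extract a uniform $\alpha<1$; your operator $L$ is exactly the map $\mathcal{P}$ the paper introduces immediately after its proof, and your telescoping rigidity step just makes explicit what the paper asserts in Equations (\ref{Eq:orthogonal1})--(\ref{Eq:orthogonal2}). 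The one detail to repair is the starting index of the composition: you take $L_{\ell}=P_{\ell,p_{\ell}}\cdots P_{\ell,2}$, so your equality case only yields $\vec{z}\in\bigcap_{v\neq r}\mathcal{N}(S_{v})$, which need not be $\{0\}$ when the root carries one of the rows of $A$ (indeed, if the root's equation were genuinely never applied, the theorem itself could fail); the paper's displayed product includes $P_{\ell,1}$, the root's projection, and with that convention your concluding step ``$\vec{z}$ is annihilated by every row of $A$'' is justified.
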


\begin{proof}
Along any path from the root $r$ to the leaf $\ell$, the dispersion stage is identical to the classical Kaczmarz algorithm, and so we can write (see \cite{KwMy01}):
\begin{equation*}
\vec{x}^{S} - \vec{x}^{(n)}_{\ell} =  P_{\ell, {p_{\ell}}}( \vec{x}^{S} - \vec{x}^{(n)}_{\ell, {p_{\ell} - 1}} )
=  P_{\ell, {p_{\ell}}} \dots  P_{\ell, 2} P_{\ell, 1} ( \vec{x}^{S} - \vec{x}^{(n)} ), \label{Eq:projections}
\end{equation*}
from which it follows immediately that
\begin{equation} \label{Eq:leaf-estimate}
\| \vec{x}^{S} - \vec{x}^{(n)}_{\ell} \| \leq \| \vec{x}^{S} - \vec{x}^{(n)} \|.
\end{equation}

We claim that unless $\vec{x}^{S} = \vec{x}^{(n)}$, we must have a strict inequality for at least one leaf, say $\ell_{0}$.  Indeed, suppose to the contrary that for every leaf $\ell$, we had equality in Equation (\ref{Eq:leaf-estimate}), then by Equation (\ref{Eq:projections}), we must have for every vertex $v = (
\ell, k)$ in the path from the root $r$ to the leaf $\ell$:
\begin{equation} \label{Eq:orthogonal1}
 P_{v} ( \vec{x}^{S} - \vec{x}^{(n)}) = \vec{x}^{S} - \vec{x}^{(n)}.
\end{equation}
Therefore, we obtain  
\begin{equation} \label{Eq:orthogonal2}
S_{v} (\vec{x}^{S} - \vec{x}^{(n)}) = 0  \text{ for all vertices } v.
\end{equation}
By our assumption that the equation has a unique solution, we obtain that $\vec{x}^{S} - \vec{x}^{(n)} = 0$.

By Equations (\ref{Eq:total-weights}) and (\ref{Eq:leaf-estimate}) and our previous claim, we have
\begin{equation} \label{Eq:strict-inequality}
\| \vec{x}^{S} - \vec{x}^{(n+1)} \|  < \sum_{\ell \in \mathcal{L} } w(\ell, r) \| \vec{x}^{S} - \vec{x}^{(n)} \| = \| \vec{x}^{S} - \vec{x}^{(n)} \|.
\end{equation}

By continuity and compactness, there is a uniform constant $\alpha$ less than 1 that satisfies the claim.  This completes the proof.
\end{proof}

As we shall see in the sequel, we can interpret the above proof in the following way:  define the mapping 
\begin{equation*} 
\mathcal{P} := \sum_{\ell \in \mathcal{L}} w(\ell, r)  P_{\ell, {p_{\ell}}} \dots  P_{\ell, 2} P_{\ell, 1},
\end{equation*}
then the mapping $\vec{z} \mapsto \vec{x}^{S} - \mathcal{P}( \vec{x}^{S} - \vec{z} )$ is a contraction with unique fixed point $\vec{x}^{S}$.  Moreover, the iteration of the algorithm can be expressed as:
\begin{equation} \label{Eq:fixedpt1}
\vec{x}^{(n+1)} =  \vec{x}^{S} - \mathcal{P}(\vec{x}^{S} - \vec{x}^{(n)} ).
\end{equation}

\subsection{Consistent Systems} \label{ssec:consistent}

We shall show in this section that the distributed Kaczmarz algorithm as defined in Equations (\ref{Eq:tree-update}) and (\ref{Eq:simple-back}) will converge to the solution with minimal norm in the case that there exists more than one solution.  We first introduce the relaxed version of the algorithm; we will show that for any appropriate relaxation parameter, the relaxed algorithm will converge to the solution of minimal norm.

The relaxed distributed Kaczmarz algorithm for tree based equations is as follows.  Choose a relaxation parameter $\omega > 0$ (generally, we will require $\omega \in (0,2)$, though see Section \ref{S:examples} for further discussion).  At each node $w$ during the dispersion stage of iteration $n$, the update becomes:
\begin{equation} \label{Eq:relaxed-update}
\vec{x}_{w}^{(n)} = \vec{x}_{v}^{(n)} + \omega \dfrac{r_{w}(\vec{x}_{v}^{(n)})}{\| \vec{a}_{w} \|^2} \vec{a}_{w}.
\end{equation}
We suppress the dependence of $\vec{x}_{v}^{(n)}$ on $\omega$, but we will consider the limit
\begin{equation} \label{Eq:x-omega-limit}
 \lim_{n \to \infty} \vec{x}^{(n)} := \vec{x}(\omega)
\end{equation}
which (in general) depends on $\omega$.  We will prove in Theorem \ref{Th:omega-c} that when the system of equations is consistent, then this limit exists and is in fact independent of $\omega$.  We will prove in Theorem \ref{Th:omega-ic} that when the system of equations is inconsistent, then the limit exists, depends on $\omega$, and $\vec{x}(\omega) \to \vec{x}^{LS}$ as $\omega \to 0$, where $\vec{x}^{LS}$ is a weighted least-squares solution.

As in Equations (\ref{Eq:linear-proj}) and (\ref{Eq:affine-proj}), we use $P_{v}$ and $Q_{v}$ to denote the linear and affine projections, respectively.  We will need the fact that $Q_{v}$ is Lipschitz with constant $1$:
\[ \| Q_{v} \vec{z}_{1} - Q_{v} \vec{z}_{2} \| \leq \| \vec{z}_{1} - \vec{z}_{2} \| . \]

The relaxed Kaczmarz update in Equation (\ref{Eq:relaxed-update}) can be expressed as:
\[ \vec{x}_{w}^{(n)} = [(1 - \omega) I + \omega Q_{w}] \vec{x}_{v}^{(n)} =: Q_{w}^{\omega} \vec{x}_{v}^{(n)}. \]
Thus, the estimate $\vec{x}^{(n)}_{\ell}$ of the solution at leaf $\ell$, given the solution estimate $\vec{x}^{(n)}$ as input at the root $r$, is:
\begin{equation} \label{Eq:relaxed-leaf}
\vec{x}^{(n)}_{\ell} = Q_{\ell, p_{\ell}}^{\omega} \cdots Q_{\ell, 2}^{\omega} Q_{\ell, 1}^{\omega} \vec{x}^{(n)} =: \mathcal{Q}_{\ell}^{\omega} \vec{x}^{(n)}.
\end{equation}
We can now write the full update, with both dispersion and pooling stages, of the relaxed Kaczmarz algorithm as:
\begin{equation} \label{Eq:full-update}
\vec{x}^{(n+1)} = \sum_{\ell \in \mathcal{L} } w(\ell, r) \mathcal{Q}_{\ell}^{\omega} \vec{x}^{(n)} =: \mathcal{Q}^{\omega} \vec{x}^{(n)}.
\end{equation}

We note that, as above, each $Q_{v}^{\omega}$ is a Lipschitz map with constant $1$ whenever $0 < \omega < 1$, but in fact, since $Q_{v} \vec{z}_{1} - Q_{v} \vec{z}_{2} = P_{v}\vec{z}_{1} - P_{v}\vec{z}_{2}$, we have that $Q_{v}^{\omega}$ is Lipschitz with constant $1$ whenever $0 < \omega < 2$.  Moreover, as $\sum_{\ell \in \mathcal{L}}  w(\ell, r) = 1$, we obtain: 
\begin{lemma} \label{L:Q-Lip}
For $0 < \omega < 2$, $\mathcal{Q}_{\ell}^{\omega}$ and $\mathcal{Q}^{\omega}$ are Lipschitz with constant $1$.
\end{lemma}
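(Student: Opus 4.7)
The plan is to reduce the lemma to a single observation about one node's relaxed projection $Q_v^\omega$ and then propagate that bound through composition and convex combination. The statement that $\mathcal{Q}_\ell^\omega$ is Lipschitz with constant $1$ will follow from the definition $\mathcal{Q}_\ell^\omega = Q_{\ell,p_\ell}^\omega \cdots Q_{\ell,1}^\omega$ in Equation (\ref{Eq:relaxed-leaf}), since compositions of Lipschitz-$1$ maps are Lipschitz-$1$. The bound for $\mathcal{Q}^\omega = \sum_\ell w(\ell,r)\mathcal{Q}_\ell^\omega$ in Equation (\ref{Eq:full-update}) will then follow by the triangle inequality together with the fact that the weights $w(\ell,r)$ are nonnegative and sum to $1$ by Equation (\ref{Eq:total-weights}). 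So the entire statement collapses to the one-node claim: for $0 < \omega < 2$ and any node $v$, $Q_v^\omega$ is Lipschitz with constant $1$.

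For this one-node bound, I would use the remark already recorded in the excerpt that $Q_v \vec{z}_1 - Q_v \vec{z}_2 = P_v \vec{z}_1 - P_v \vec{z}_2$ (the affine shift $\vec{h}_v$ cancels). Combined with $Q_v^\omega = (1-\omega)I + \omega Q_v$, this gives
\[
 Q_v^\omega \vec{z}_1 - Q_v^\omega \vec{z}_2 = (1-\omega)\vec{d} + \omega P_v \vec{d}, \qquad \vec{d} := \vec{z}_1 - \vec{z}_2.
\]
Now orthogonally decompose $\vec{d} = \vec{p} + \vec{q}$ with $\vec{p} = P_v \vec{d} \in \mathcal{N}(S_v)$ and $\vec{q} \in \mathcal{N}(S_v)^\perp$. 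A direct substitution gives $Q_v^\omega \vec{z}_1 - Q_v^\omega \vec{z}_2 = \vec{p} + (1-\omega)\vec{q}$, and by the Pythagorean identity
\[
\|Q_v^\omega \vec{z}_1 - Q_v^\omega \vec{z}_2\|^2 = \|\vec{p}\|^2 + (1-\omega)^2 \|\vec{q}\|^2 \leq \|\vec{p}\|^2 + \|\vec{q}\|^2 = \|\vec{d}\|^2,
\]
where the inequality uses $(1-\omega)^2 \leq 1$ precisely when $0 \leq \omega \leq 2$. This establishes the one-node Lipschitz bound.

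Given the one-node bound, the two remaining assertions are essentially bookkeeping. For $\mathcal{Q}_\ell^\omega$, iterating the one-node inequality along the path $r = (\ell,1), \ldots, (\ell,p_\ell) = \ell$ yields $\|\mathcal{Q}_\ell^\omega \vec{z}_1 - \mathcal{Q}_\ell^\omega \vec{z}_2\| \leq \|\vec{z}_1 - \vec{z}_2\|$. For $\mathcal{Q}^\omega$, the triangle inequality gives
\[
\|\mathcal{Q}^\omega \vec{z}_1 - \mathcal{Q}^\omega \vec{z}_2\| \leq \sum_{\ell \in \mathcal{L}} w(\ell,r)\, \|\mathcal{Q}_\ell^\omega \vec{z}_1 - \mathcal{Q}_\ell^\omega \vec{z}_2\| \leq \Bigl(\sum_{\ell \in \mathcal{L}} w(\ell,r)\Bigr)\|\vec{z}_1 - \vec{z}_2\| = \|\vec{z}_1 - \vec{z}_2\|,
\]
invoking Equation (\ref{Eq:total-weights}) in the last step.

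There is really no main obstacle here: the only place the hypothesis $0 < \omega < 2$ enters nontrivially is in the inequality $(1-\omega)^2 \leq 1$, and everything else is standard projection geometry and averaging. The one small thing to be careful about is remembering that $Q_v$, not being linear, does not satisfy $Q_v^\omega = (1-\omega)I + \omega Q_v$ as a linear identity in any useful sense, which is why the argument must be phrased in terms of differences $\vec{z}_1 - \vec{z}_2$ (so that the affine shift cancels and $P_v$ appears in its place).
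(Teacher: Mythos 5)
Your proof is correct and follows essentially the same route as the paper: reduce to the single-node bound via the identity $Q_v\vec{z}_1 - Q_v\vec{z}_2 = P_v\vec{z}_1 - P_v\vec{z}_2$, then compose along each root-to-leaf path and average using $\sum_{\ell} w(\ell,r)=1$. The only difference is that you write out explicitly (via the orthogonal decomposition and $(1-\omega)^2\le 1$) the step the paper leaves as a one-line remark, which is a harmless and welcome addition.
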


We note that the mappings $Q_{(\cdot)}^{(\cdot)}, \mathcal{Q}_{(\cdot)}^{(\cdot)}$ are affine transformations; we also have use for the analogous linear transformations.  Similar to Equations (\ref{Eq:relaxed-leaf}) and (\ref{Eq:full-update}), we write 
\begin{align*}
P_{v}^{\omega} &:= (1 - \omega) I + \omega P_{v}; \\
\mathcal{P}_{\ell}^{\omega} &:= P_{\ell, p_{\ell}}^{\omega} \cdots P_{\ell, 2}^{\omega} P_{\ell, 1}^{\omega}; \\
\mathcal{P}^{\omega} &:= \sum_{\ell \in \mathcal{L} } w(\ell, r) \mathcal{P}_{\ell}^{\omega}.
\end{align*}

\begin{theorem} \label{Th:omega-c}
If the system of equations given by $A \vec{x} = \vec{b}$ is consistent, then for any $0 < \omega < 2$, the sequence of estimates $\vec{x}^{(n)}$ as given in Equation (\ref{Eq:full-update}) converges to the solution $\vec{x}^{M}$ of minimal norm as given by (\ref{Eq:soln-min-norm}), provided the initial estimate $\vec{x}^{(0)} \in \mathcal{R}(A^{*})$.
\end{theorem}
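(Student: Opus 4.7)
The plan is to reduce the affine iteration to a purely linear one, then show that the linear operator $\mathcal{P}^\omega$ is a strict contraction on the subspace where the initial error lives. Since the system is consistent, any solution $\vec{x}$ of $A\vec{x}=\vec{b}$ satisfies $S_v \vec{x} = b_v$ at every vertex, so $Q_v \vec{x} = \vec{x}$ and hence $Q_v^\omega \vec{x} = \vec{x}$. In particular, $\mathcal{Q}^\omega \vec{x}^M = \vec{x}^M$. A direct computation gives $Q_v \vec{z} - Q_v \vec{x}^M = P_v(\vec{z}-\vec{x}^M)$, so that $Q_v^\omega \vec{z} - \vec{x}^M = P_v^\omega(\vec{z}-\vec{x}^M)$, and iterating/averaging yields
\begin{equation*}
\vec{x}^{(n+1)} - \vec{x}^M \;=\; \mathcal{Q}^\omega \vec{x}^{(n)} - \vec{x}^M \;=\; \mathcal{P}^\omega\bigl(\vec{x}^{(n)} - \vec{x}^M\bigr) \;=\; (\mathcal{P}^\omega)^{n+1}\bigl(\vec{x}^{(0)} - \vec{x}^M\bigr).
\end{equation*}
Since $\vec{x}^M \in \mathcal{R}(A^*)$ (as the minimum-norm solution) and $\vec{x}^{(0)} \in \mathcal{R}(A^*)$ by hypothesis, the initial error $\vec{e} := \vec{x}^{(0)} - \vec{x}^M$ lies in $\mathcal{R}(A^*)$.

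Next I would show that $\mathcal{R}(A^*) = \mathcal{N}(A)^\perp$ is invariant under $\mathcal{P}^\omega$. Because $\mathcal{N}(A) = \bigcap_v \mathcal{N}(S_v)$, each $P_v$ fixes $\mathcal{N}(A)$ pointwise. Since $P_v$ is self-adjoint, its invariant subspace $\mathcal{N}(A)$ has an invariant orthogonal complement, so $P_v$, hence $P_v^\omega$, hence every product $\mathcal{P}_\ell^\omega$ and the convex combination $\mathcal{P}^\omega$, leaves $\mathcal{R}(A^*)$ invariant.

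The heart of the argument is to show that $\mathcal{P}^\omega$ is strictly contractive on $\mathcal{R}(A^*)$. Decomposing $\vec{z} = \vec{z}_1 + \vec{z}_2$ with $\vec{z}_1 \in \mathcal{N}(S_v)$ and $\vec{z}_2 \in \mathcal{N}(S_v)^\perp$, one computes $P_v^\omega \vec{z} = \vec{z}_1 + (1-\omega)\vec{z}_2$, so
\begin{equation*}
\|P_v^\omega \vec{z}\|^2 = \|\vec{z}_1\|^2 + (1-\omega)^2 \|\vec{z}_2\|^2 \le \|\vec{z}\|^2,
\end{equation*}
with equality if and only if $\vec{z} \in \mathcal{N}(S_v)$, since $(1-\omega)^2 < 1$ for $\omega \in (0,2)$. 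Chaining this bound along the composition $\mathcal{P}_\ell^\omega = P_{\ell,p_\ell}^\omega \cdots P_{\ell,1}^\omega$, then applying the triangle inequality together with $\sum_\ell w(\ell,r) = 1$, yields $\|\mathcal{P}^\omega \vec{z}\| \le \|\vec{z}\|$. I would then trace back the equality case: equality forces $\|\mathcal{P}_\ell^\omega \vec{z}\| = \|\vec{z}\|$ for every leaf (weights are strictly positive), and in turn that $\vec{z} \in \mathcal{N}(S_v)$ at every vertex $v$ on every root-to-leaf path. So equality implies $\vec{z} \in \bigcap_v \mathcal{N}(S_v) = \mathcal{N}(A)$, contradicting $0 \neq \vec{z} \in \mathcal{R}(A^*)$. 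A compactness argument on the unit sphere of $\mathcal{R}(A^*)$ then produces a uniform constant $\alpha < 1$ with $\|\mathcal{P}^\omega \vec{z}\| \le \alpha \|\vec{z}\|$ for all $\vec{z} \in \mathcal{R}(A^*)$. Combined with invariance, this gives $\|\vec{x}^{(n)} - \vec{x}^M\| \le \alpha^n \|\vec{e}\| \to 0$.

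The main obstacle is the equality-case analysis in the last step: one must carefully propagate equality through both the composition (one vertex at a time, using self-adjointness so that a vector preserving its norm under $P_v^\omega$ is fixed by it) and through the triangle inequality in the convex combination over leaves. Once this is in hand, the restriction to $\mathcal{R}(A^*)$ is essential because $\mathcal{P}^\omega$ acts as the identity on $\mathcal{N}(A)$ and so cannot be a contraction on the whole space; the hypothesis $\vec{x}^{(0)} \in \mathcal{R}(A^*)$ precisely rules out this obstruction and forces convergence to the minimum-norm solution rather than to some other solution of the system.
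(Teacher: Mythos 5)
Your proof is correct, and it reaches the conclusion by a genuinely different route than the paper. Both arguments share the affine-to-linear reduction $\mathcal{Q}^{\omega}\vec{z}-\vec{x}^{M}=\mathcal{P}^{\omega}(\vec{z}-\vec{x}^{M})$ and the identification of the equality/fixed-point set with $\bigcap_{v}\mathcal{N}(S_{v})=\mathcal{N}(A)$ (your equality-case analysis is essentially the paper's Lemma \ref{L:Natt-3.4}). Where you diverge is the convergence mechanism: the paper invokes Natterer's operator-theoretic machinery (Lemmas \ref{L:Natt-V.3.1}, \ref{L:Natt-3.2a}, \ref{L:Natt-3.4} feeding into Lemma \ref{L:Natt-3.5}) to conclude that $(\mathcal{P}^{\omega})^{k}$ converges strongly to the orthogonal projection onto $\mathcal{N}(A)$, and then reads off the limit $\vec{y}+Pr(\vec{z}-\vec{y})$; you instead prove that $\mathcal{P}^{\omega}$ restricted to the invariant subspace $\mathcal{R}(A^{*})$ is a strict contraction, using compactness of the unit sphere to extract a uniform $\alpha<1$. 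Your route is more elementary and has the bonus of delivering the linear convergence rate immediately, something the paper only establishes later via the SOR/spectral-radius analysis (Corollary \ref{C:Fixed}); it mirrors the compactness argument the paper already uses in Theorem \ref{Th:unique-soln}. The trade-off is that the compactness step is genuinely finite-dimensional, whereas the paper's decomposition $H=\mathcal{N}(I-T)\oplus\overline{\mathcal{R}(I-T)}$ is stated for a general Hilbert space and would survive in settings where no uniform contraction constant exists. All the individual steps you flag as delicate (invariance of $\mathcal{R}(A^{*})$ via self-adjointness of $P_{v}$, propagation of the equality case through the composition and through the convex combination with strictly positive weights, and the fact that every vertex lies on some root-to-leaf path so that the intersection really is $\mathcal{N}(A)$) check out.
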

We shall prove Theorem \ref{Th:omega-c} using a sequence of lemmas.  We follow the argument as presented in Natterer \cite{N-86}, adapting the lemmas as necessary.  For completeness, we will state (without proof) the lemmas that we will use unaltered from \cite{N-86}.  (See also Yosida \cite{Yos68a}.)

\begin{lemma}[\cite{N-86}, Lemma V.3.1]  \label{L:Natt-V.3.1}
Let $T$ be a linear map on a Hilbert space $H$ with $\| T \| \leq 1$.  Then,
\[ H = \mathcal{N}(I - T) \oplus \overline{ \mathcal{R} ( I - T ) }.  \]
\end{lemma}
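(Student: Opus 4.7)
The plan is to reduce this to the standard Hilbert space decomposition $H = \mathcal{N}(I-T) \oplus \mathcal{N}(I-T)^{\perp}$ by showing that in this contraction setting the orthogonal complement of $\mathcal{N}(I-T)$ equals $\overline{\mathcal{R}(I-T)}$. The general identity $\overline{\mathcal{R}(S)} = \mathcal{N}(S^{*})^{\perp}$, valid for any bounded linear $S$ on a Hilbert space, applied to $S = I - T$ gives $\overline{\mathcal{R}(I-T)} = \mathcal{N}(I-T^{*})^{\perp}$. So the whole lemma boils down to the identity
\[
\mathcal{N}(I-T) = \mathcal{N}(I-T^{*}),
\]
and then the orthogonal decomposition of $H$ along the closed subspace $\mathcal{N}(I-T)$ finishes the proof.

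To establish $\mathcal{N}(I-T) \subseteq \mathcal{N}(I-T^{*})$, I would take $x$ with $Tx = x$ and compute $\|T^{*}x - x\|^2$ directly. Expanding and using $\langle T^{*}x, x\rangle = \langle x, Tx\rangle = \|x\|^2$ (and the conjugate), the cross terms collapse to give
\[
\|T^{*}x - x\|^2 = \|T^{*}x\|^2 - \|x\|^2.
\]
The hypothesis $\|T\| \leq 1$ yields $\|T^{*}\| \leq 1$, so the right hand side is $\leq 0$; hence $T^{*}x = x$. The reverse inclusion follows by the same calculation with the roles of $T$ and $T^{*}$ interchanged, using $T^{**} = T$.

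Once $\mathcal{N}(I-T) = \mathcal{N}(I-T^{*})$ is in hand, the closed-range / annihilator identity gives $\overline{\mathcal{R}(I-T)} = \mathcal{N}(I-T)^{\perp}$, and the orthogonal decomposition $H = \mathcal{N}(I-T) \oplus \mathcal{N}(I-T)^{\perp}$ (which uses only that $\mathcal{N}(I-T)$ is a closed subspace, since $I - T$ is bounded) completes the proof. There is no real obstacle here; the only step with any substance is the short calculation showing that fixed points of a contraction are also fixed points of the adjoint, and that is essentially a one-line expansion of the norm squared together with the bound $\|T^{*}\| \leq 1$.
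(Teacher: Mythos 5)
Your proof is correct. The paper itself states this lemma without proof, citing Natterer \cite{N-86}, and your argument is essentially the standard one given there: the only substantive point is that a fixed point of a contraction $T$ is also a fixed point of $T^{*}$, which you establish by the norm expansion $\|T^{*}x - x\|^{2} = \|T^{*}x\|^{2} - \|x\|^{2} \leq 0$; combined with $\overline{\mathcal{R}(I-T)} = \mathcal{N}(I-T^{*})^{\perp}$ and the orthogonal decomposition along the closed subspace $\mathcal{N}(I-T)$, this gives the claim.
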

%


\begin{lemma}[\cite{N-86}, Lemma V.3.2]  \label{L:Natt-3.2}
Suppose $\{ \vec{z}_{k} \}$ is a sequence in $\mathbb{C}^{d}$ such that for any leaf $\ell \in \mathcal{L}$,
\[ \| \vec{z}_{k} \| \leq 1 \text{ and } \lim_{k \to \infty} \|  \mathcal{P}_{\ell}^{\omega} \vec{z}_{k} \| = 1. \]
Then for $0 < \omega < 2$, we have
\[ \lim_{k \to \infty} (I - \mathcal{P}^{\omega}_{\ell}) \vec{z}_{k} = 0. \]
\end{lemma}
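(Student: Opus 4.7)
The plan is to exploit the orthogonal decomposition built into each relaxed projection $P_v^\omega = (1-\omega)I + \omega P_v$. The engine of the proof is a one-step Pythagorean identity: writing $z = P_v z + (I-P_v)z$ as an orthogonal sum and expanding, one gets
\begin{equation*}
P_v^\omega z = P_v z + (1-\omega)(I - P_v) z,
\qquad
\|P_v^\omega z\|^2 = \|P_v z\|^2 + (1-\omega)^2\,\|(I-P_v)z\|^2,
\end{equation*}
which rearranges to
\begin{equation*}
\|z\|^2 - \|P_v^\omega z\|^2 \;=\; \omega(2-\omega)\,\|(I - P_v)z\|^2.
\end{equation*}
The restriction $0 < \omega < 2$ enters precisely to make $\omega(2-\omega) > 0$, and in particular it gives $\|P_v^\omega\| \le 1$, so that $\|\mathcal{P}_\ell^\omega\| \le 1$ as well (the linear analogue of Lemma \ref{L:Q-Lip}).

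Next I would telescope along the path $r = (\ell,1), (\ell,2), \dots, (\ell,p_\ell) = \ell$. Setting $z_k^{(0)} := \vec{z}_k$ and $z_k^{(j)} := P^\omega_{\ell,j} z_k^{(j-1)}$, so that $z_k^{(p_\ell)} = \mathcal{P}_\ell^\omega \vec{z}_k$, applying the one-step identity at each vertex yields
\begin{equation*}
\|\vec{z}_k\|^2 - \|\mathcal{P}_\ell^\omega \vec{z}_k\|^2 \;=\; \omega(2-\omega) \sum_{j=1}^{p_\ell} \|(I - P_{\ell,j})\, z_k^{(j-1)}\|^2.
\end{equation*}
Combining $\|\vec{z}_k\| \le 1$ with $\|\mathcal{P}_\ell^\omega \vec{z}_k\| \le \|\vec{z}_k\|$ and the hypothesis $\|\mathcal{P}_\ell^\omega \vec{z}_k\| \to 1$ forces $\|\vec{z}_k\| \to 1$, so the left-hand side tends to $0$. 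Because the right-hand side is a finite sum of nonnegative terms (with fixed length $p_\ell$), each term must vanish in the limit, giving $\|(I - P_{\ell,j})\, z_k^{(j-1)}\| \to 0$ for every $j = 1, \dots, p_\ell$.

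Finally I would recover the vector conclusion by telescoping back. By the definition of $P^\omega_{\ell,j}$,
\begin{equation*}
z_k^{(j)} - z_k^{(j-1)} = -\omega\,(I - P_{\ell,j})\, z_k^{(j-1)},
\end{equation*}
so each increment tends to $0$ in norm as $k \to \infty$, and summing over $j$ gives
\begin{equation*}
(I - \mathcal{P}_\ell^\omega)\vec{z}_k \;=\; \vec{z}_k - z_k^{(p_\ell)} \;=\; -\sum_{j=1}^{p_\ell} \bigl(z_k^{(j)} - z_k^{(j-1)}\bigr) \;\longrightarrow\; 0,
\end{equation*}
as desired. I do not expect a substantive obstacle: once the one-step identity is in hand, the rest is bookkeeping. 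The only place that deserves attention is keeping the indices $z_k^{(j)}$ straight so that the telescoping pairs up correctly, and confirming that the norm bound used in the second paragraph really does follow from Lemma \ref{L:Q-Lip} transferred to the linear part $\mathcal{P}_\ell^\omega$.
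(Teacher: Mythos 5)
Your proof is correct. Note that the paper itself gives no proof of this lemma --- it is stated explicitly ``without proof'' as an unaltered import from Natterer's Lemma V.3.2 --- so there is nothing to compare against line by line; your argument is the standard one for relaxed orthogonal projections, and every step checks out: the one-step identity $\|z\|^2 - \|P_v^\omega z\|^2 = \omega(2-\omega)\|(I-P_v)z\|^2$ is valid because $P_v$ is an orthogonal projection, the telescoped sum forces each residual $\|(I-P_{\ell,j})z_k^{(j-1)}\|$ to vanish, and the increment formula $z_k^{(j)} - z_k^{(j-1)} = -\omega(I-P_{\ell,j})z_k^{(j-1)}$ converts this into the vector conclusion. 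The only cosmetic remark is that the norm bound $\|\mathcal{P}_\ell^\omega\| \le 1$ already falls out of your own one-step identity (since $\omega(2-\omega) > 0$), so the appeal to a linear analogue of Lemma \ref{L:Q-Lip} is not needed.
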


\begin{lemma} \label{L:Natt-3.2a}
Suppose $\{ \vec{z}_{k} \}$ is a sequence in $\mathbb{C}^{d}$ such that 
\[ \| \vec{z}_{k} \| \leq 1 \text{ and } \lim_{k \to \infty} \| \mathcal{P}^{\omega} \vec{z}_{k} \| = 1, \]
then for $0 < \omega < 2$, we have
\[ \lim_{k \to \infty} (I - \mathcal{P}^{\omega}) \vec{z}_{k} = 0. \]
\end{lemma}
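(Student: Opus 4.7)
The plan is to reduce Lemma~\ref{L:Natt-3.2a} to the leaf-by-leaf statement in Lemma~\ref{L:Natt-3.2} by exploiting the convex-combination structure of $\mathcal{P}^{\omega}$. Write $\vec{v}_{k} = \mathcal{P}^{\omega}\vec{z}_{k} = \sum_{\ell\in\mathcal{L}} w(\ell,r)\,\mathcal{P}^{\omega}_{\ell}\vec{z}_{k}$. Since each $P^{\omega}_{v} = (1-\omega)I + \omega P_{v}$ is a contraction for $0<\omega<2$ (a direct computation in the orthogonal decomposition induced by $P_{v}$), each composition $\mathcal{P}^{\omega}_{\ell}$ has operator norm at most $1$, so $\|\mathcal{P}^{\omega}_{\ell}\vec{z}_{k}\| \leq \|\vec{z}_{k}\| \leq 1$ for every leaf $\ell$.

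The first key step is to show $\|\mathcal{P}^{\omega}_{\ell}\vec{z}_{k}\| \to 1$ for each individual leaf. By the triangle inequality together with $\sum_{\ell} w(\ell,r) = 1$, we get
\begin{equation*}
1 \;=\; \lim_{k\to\infty}\|\vec{v}_{k}\| \;\leq\; \liminf_{k\to\infty}\sum_{\ell\in\mathcal{L}} w(\ell,r)\,\|\mathcal{P}^{\omega}_{\ell}\vec{z}_{k}\| \;\leq\; 1,
\end{equation*}
so $\sum_{\ell} w(\ell,r)\bigl(1 - \|\mathcal{P}^{\omega}_{\ell}\vec{z}_{k}\|\bigr) \to 0$. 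Each summand is nonnegative and every weight $w(\ell,r)$ is strictly positive, so $\|\mathcal{P}^{\omega}_{\ell}\vec{z}_{k}\| \to 1$ for each $\ell\in\mathcal{L}$ individually.

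Next, apply Lemma~\ref{L:Natt-3.2} to each leaf $\ell$ separately: since $\|\vec{z}_{k}\|\leq 1$ and $\|\mathcal{P}^{\omega}_{\ell}\vec{z}_{k}\|\to 1$, we conclude $(I - \mathcal{P}^{\omega}_{\ell})\vec{z}_{k} \to 0$. Finally, write
\begin{equation*}
(I - \mathcal{P}^{\omega})\vec{z}_{k} \;=\; \sum_{\ell\in\mathcal{L}} w(\ell,r)\,(I - \mathcal{P}^{\omega}_{\ell})\vec{z}_{k},
\end{equation*}
using $\sum_{\ell} w(\ell,r) = 1$, and pass to the limit term by term in the finite sum.

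The argument is essentially routine given Lemma~\ref{L:Natt-3.2}; the only substantive point is the convex-combination / strict-convexity type step that propagates $\|\mathcal{P}^{\omega}\vec{z}_{k}\|\to 1$ down to each leaf, and the main thing to be careful about is that this requires \emph{strict} positivity of every weight $w(\ell,r)$, which is guaranteed because every edge weight is positive and $w(\ell,r)$ is a product of edge weights along the path from the root to $\ell$.
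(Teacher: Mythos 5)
Your proposal is correct and follows essentially the same route as the paper's own proof: reduce to Lemma~\ref{L:Natt-3.2} by sandwiching $\lim_k \| \mathcal{P}^{\omega}_{\ell} \vec{z}_{k} \| = 1$ for each leaf via the convex combination $\sum_{\ell} w(\ell,r) = 1$, then sum the leafwise conclusions. Your explicit remark that the step requires strict positivity of every $w(\ell,r)$ is a point the paper leaves implicit, but the argument is the same.
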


\begin{proof}
Note that 
\[ \left(I - \mathcal{P}^{\omega} \right) \vec{z}_{k} = \sum_{\ell \in \mathcal{L}} w(\ell,r) \left( I - \mathcal{P}_{\ell}^{\omega} \right) \vec{z}_{k},\]
so it is sufficient to show that the hypotheses of Lemma \ref{L:Natt-3.2} are satisfied.  Since $\| \mathcal{P}_{\ell}^{\omega} \vec{z}_{k} \| \leq 1$ and $\sum_{\ell} w(\ell,r) = 1$, we have
\begin{equation*}
1 = \lim_{k \to \infty} \| \mathcal{P}^{\omega} \vec{z}_{k} \| \leq \lim_{k \to \infty} \sum_{\ell \in \mathcal{L}} w(\ell,r) \| \mathcal{P}_{\ell}^{\omega} \vec{z}_{k} \| \leq 1.
\end{equation*}
Thus, we must have $\lim \| \mathcal{P}_{\ell}^{\omega} \vec{z}_{k} \| = 1$ for every $\ell \in \mathcal{L}$.
\end{proof}




\begin{lemma} \label{L:Natt-3.4}
For $0 < \omega < 2$, we have
\begin{equation}
\mathcal{N} (I - \mathcal{P}^{\omega}) = \bigcap_{v \text{ \emph{node} } } \mathcal{N}(I - P_{v}).
\end{equation}
\end{lemma}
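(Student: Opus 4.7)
The plan is to prove the two inclusions separately. The direction $\bigcap_{v} \mathcal{N}(I - P_{v}) \subseteq \mathcal{N}(I - \mathcal{P}^{\omega})$ is immediate: if $P_{v} \vec{z} = \vec{z}$ for every node $v$, then $P_{v}^{\omega} \vec{z} = (1 - \omega) \vec{z} + \omega P_{v} \vec{z} = \vec{z}$, so $\mathcal{P}_{\ell}^{\omega} \vec{z} = \vec{z}$ for each leaf $\ell$, and hence $\mathcal{P}^{\omega} \vec{z} = \vec{z}$ by Equation (\ref{Eq:total-weights}).

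For the nontrivial inclusion, I will peel the operator apart in two stages: first over leaves, then along each root-to-leaf path. Assume $\mathcal{P}^{\omega} \vec{z} = \vec{z}$. By the linear analogue of Lemma \ref{L:Q-Lip} each $\mathcal{P}_{\ell}^{\omega}$ has operator norm at most $1$, so $\|\mathcal{P}_{\ell}^{\omega} \vec{z}\| \leq \|\vec{z}\|$. Because $\vec{z} = \sum_{\ell} w(\ell, r) \mathcal{P}_{\ell}^{\omega} \vec{z}$ is a convex combination with strictly positive weights summing to $1$ (by Equation (\ref{Eq:weight-path}) and Equation (\ref{Eq:total-weights})), strict convexity of the Hilbert norm forces $\mathcal{P}_{\ell}^{\omega} \vec{z} = \vec{z}$ for every leaf $\ell$.

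Next, I fix a leaf $\ell$ and expand the composition $\mathcal{P}_{\ell}^{\omega} = P_{\ell, p_{\ell}}^{\omega} \cdots P_{\ell, 1}^{\omega}$. Each factor has operator norm at most $1$, so the chain
\[
\|\vec{z}\| = \|\mathcal{P}_{\ell}^{\omega} \vec{z}\| \leq \|P_{\ell, p_{\ell} - 1}^{\omega} \cdots P_{\ell, 1}^{\omega} \vec{z}\| \leq \cdots \leq \|P_{\ell, 1}^{\omega} \vec{z}\| \leq \|\vec{z}\|
\]
must be equality throughout. The crucial numerical fact, which I would verify by a direct calculation from the orthogonal decomposition $\vec{w} = P_{v} \vec{w} + (I - P_{v}) \vec{w}$, is the identity $\|P_{v}^{\omega} \vec{w}\|^{2} = \|P_{v} \vec{w}\|^{2} + (1 - \omega)^{2} \|(I - P_{v}) \vec{w}\|^{2}$; combined with $(1 - \omega)^{2} < 1$, this forces $P_{v} \vec{w} = \vec{w}$ whenever $\|P_{v}^{\omega} \vec{w}\| = \|\vec{w}\|$. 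Starting at $k = 1$ and using that $P_{\ell, j} \vec{z} = \vec{z}$ for $j < k$ implies the intermediate vector $P_{\ell, k - 1}^{\omega} \cdots P_{\ell, 1}^{\omega} \vec{z}$ equals $\vec{z}$, I will inductively conclude $P_{\ell, k} \vec{z} = \vec{z}$ for $k = 1, \dots, p_{\ell}$. Since every node of the tree sits on some root-to-leaf path, this gives $\vec{z} \in \bigcap_{v} \mathcal{N}(I - P_{v})$.

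The main obstacle I anticipate is not conceptual but bookkeeping: the argument requires the strict positivity of the path weights $w(\ell, r)$ in order to push the convex-combination step down to every individual leaf, and it requires $\omega \in (0, 2)$ so that $(1 - \omega)^{2} < 1$ strictly, which is precisely what makes the norm identity above sensitive to the $(I - P_{v})$-component. Both are standing hypotheses of the problem, but each is indispensable to the peeling scheme.
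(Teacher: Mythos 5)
Your proof is correct and follows essentially the same route as the paper's: the easy inclusion by direct substitution, and the converse by forcing equality first across the leaf-indexed convex combination and then along each root-to-leaf composition. You simply make explicit two steps the paper leaves terse --- the strict-convexity argument at the leaf level and the identity $\|P_{v}^{\omega}\vec{w}\|^{2} = \|P_{v}\vec{w}\|^{2} + (1-\omega)^{2}\|(I-P_{v})\vec{w}\|^{2}$ that turns norm equality into $P_{v}\vec{w} = \vec{w}$ --- both of which are the right justifications.
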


\begin{proof}
Suppose $P_{v} \vec{z} = \vec{z}$ for every node $v$.  Then
\begin{equation*}
\mathcal{P}^{\omega} \vec{z} = \sum_{\ell \in \mathcal{L} } w(\ell, r) P_{\ell, p_{\ell}}^{\omega} \dots P_{\ell,1}^{\omega} \vec{z} = \sum_{\ell \in \mathcal{L} } w(\ell, r) \vec{z} = \vec{z}
\end{equation*}
thus the left containment follows.

Conversely, suppose that $\mathcal{P}^{\omega} \vec{z} = \vec{z}$.  Again, we obtain
\[ \| \vec{z} \| = \| \mathcal{P}^{\omega} \vec{z} \| \leq \sum_{\ell \in \mathcal{L} } w(\ell, r) \| P_{\ell, p_{\ell}}^{\omega} \cdots P_{\ell, 1}^{\omega} \vec{z} \| \leq \| \vec{z} \| \]
which implies that 
\[ P_{\ell, p_{\ell}}^{\omega} \cdots P_{\ell, 1}^{\omega} \vec{z} = \vec{z} \]
for every leaf $\ell$.  Hence, for every $\ell$, and every $j=1, \dots, p_{\ell}$, $P_{\ell, j}^{\omega} \vec{z} = \vec{z}$.
\end{proof}


\begin{lemma}[\cite{N-86}, Lemma V.3.5] \label{L:Natt-3.5}
For $0 < \omega < 2$, $( \mathcal{P}^{\omega} )^{k}$ converges strongly, as $k \to \infty$, to the orthogonal projection onto
\[ \bigcap_{v \text{ \emph{node} } } \mathcal{N}(I - P_{v} ) = \mathcal{N}(A). \]
\end{lemma}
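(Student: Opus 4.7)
The plan is to decompose $\mathbb{C}^{d}$ orthogonally as $\mathcal{N}(A) \oplus \mathcal{N}(A)^{\perp}$, verify that $\mathcal{P}^{\omega}$ preserves this splitting, acts as the identity on the first summand, and strictly contracts the second summand to $0$ under iteration. Identifying the limit with the orthogonal projection onto $\mathcal{N}(A)$ will then follow. First, since $P_{v}$ is the orthogonal projection onto $\mathcal{N}(S_{v})$, we have $P_{v}\vec{z} = \vec{z}$ iff $\vec{z} \in \mathcal{N}(S_{v})$, so $\bigcap_{v}\mathcal{N}(I - P_{v}) = \bigcap_{v}\mathcal{N}(S_{v}) = \mathcal{N}(A)$. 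Combined with Lemma \ref{L:Natt-3.4}, this gives $\mathcal{N}(I - \mathcal{P}^{\omega}) = \mathcal{N}(A)$.

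To upgrade Lemma \ref{L:Natt-V.3.1} to an orthogonal decomposition, I would invoke the standard fact that if $\|T\| \leq 1$ on a Hilbert space then $\mathcal{N}(I - T) = \mathcal{N}(I - T^{*})$ (a short proof: $T\vec{z} = \vec{z}$ forces $\|T^{*}\vec{z} - \vec{z}\|^{2} = \|T^{*}\vec{z}\|^{2} - \|\vec{z}\|^{2} \leq 0$). Applied to $T = \mathcal{P}^{\omega}$, whose operator norm is at most $1$ by (the linear analogue of) Lemma \ref{L:Q-Lip}, this yields $\overline{\mathcal{R}(I - \mathcal{P}^{\omega})} = \mathcal{N}(I - (\mathcal{P}^{\omega})^{*})^{\perp} = \mathcal{N}(A)^{\perp}$. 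Moreover, since each $P_{v}$ is self-adjoint and fixes $\mathcal{N}(A)$, so does each $P_{v}^{\omega}$, and the same holds for products and convex combinations; thus $(\mathcal{P}^{\omega})^{*}\vec{u} = \vec{u}$ for $\vec{u} \in \mathcal{N}(A)$, which in turn implies $\mathcal{P}^{\omega}$ leaves $\mathcal{N}(A)^{\perp}$ invariant.

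On $\mathcal{N}(A)$, $(\mathcal{P}^{\omega})^{k}$ is the identity. On $\mathcal{N}(A)^{\perp}$, I would argue by contradiction using compactness of the unit sphere in $\mathbb{C}^{d}$. For $\vec{z} \in \mathcal{N}(A)^{\perp}\setminus\{0\}$, the sequence $\|(\mathcal{P}^{\omega})^{k}\vec{z}\|$ is monotonically nonincreasing, hence converges to some $\alpha \geq 0$. If $\alpha > 0$, set
\begin{equation*}
\vec{w}_{k} = \frac{(\mathcal{P}^{\omega})^{k}\vec{z}}{\|(\mathcal{P}^{\omega})^{k}\vec{z}\|},
\end{equation*}
a unit vector in $\mathcal{N}(A)^{\perp}$, and note $\|\mathcal{P}^{\omega}\vec{w}_{k}\| \to 1$. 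Passing to a convergent subsequence $\vec{w}_{k_{j}} \to \vec{w}_{\infty}$ with $\|\vec{w}_{\infty}\| = 1$ and $\vec{w}_{\infty} \in \mathcal{N}(A)^{\perp}$, Lemma \ref{L:Natt-3.2a} yields $(I - \mathcal{P}^{\omega})\vec{w}_{\infty} = 0$, so $\vec{w}_{\infty} \in \mathcal{N}(I - \mathcal{P}^{\omega}) = \mathcal{N}(A)$. This forces $\vec{w}_{\infty} = 0$, a contradiction. Hence $\alpha = 0$.

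Finally, writing an arbitrary $\vec{z} \in \mathbb{C}^{d}$ as $\vec{z}_{1} + \vec{z}_{2}$ with $\vec{z}_{1} \in \mathcal{N}(A)$ and $\vec{z}_{2} \in \mathcal{N}(A)^{\perp}$, invariance gives $(\mathcal{P}^{\omega})^{k}\vec{z} = \vec{z}_{1} + (\mathcal{P}^{\omega})^{k}\vec{z}_{2} \to \vec{z}_{1}$, which is exactly the orthogonal projection of $\vec{z}$ onto $\mathcal{N}(A)$. The main obstacle is the intermediate algebraic step: showing that the abstract splitting from Lemma \ref{L:Natt-V.3.1} coincides with the orthogonal one, i.e., verifying the adjoint identity and the resulting invariance of $\mathcal{N}(A)^{\perp}$ under $\mathcal{P}^{\omega}$. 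Once this is established, the compactness-plus-Lemma~\ref{L:Natt-3.2a} contradiction argument closes the proof cleanly in finite dimensions.
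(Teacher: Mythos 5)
Your proof is correct and follows essentially the same route the paper intends: it invokes Lemmas \ref{L:Natt-V.3.1}, \ref{L:Natt-3.2a}, and \ref{L:Natt-3.4} exactly as in Natterer's argument (which the paper cites in place of a written proof), and you correctly supply the adjoint identity $\mathcal{N}(I-T)=\mathcal{N}(I-T^{*})$ needed to make the splitting orthogonal. The only loose phrasing is the suggestion that self-adjointness passes to products --- it does not, but all you actually use is that $(\mathcal{P}^{\omega}_{\ell})^{*}$, being the reversed product of the self-adjoint factors $P^{\omega}_{\ell,j}$, still fixes $\mathcal{N}(A)$ pointwise, which is true.
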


The proof is identical to that in \cite{N-86}, using Lemmas \ref{L:Natt-V.3.1}, \ref{L:Natt-3.2a}, and \ref{L:Natt-3.4}.

\begin{proof}[Proof of Theorem \ref{Th:omega-c}]
Let $\vec{y}$ be any solution to the system of equations.  We claim that for any $\vec{z}$,
\begin{equation} \label{Eq:Q-omega} 
\mathcal{Q}^{\omega} \vec{z} = \mathcal{P}^{\omega} ( \vec{z} - \vec{y} ) + \vec{y}
\end{equation}
Indeed, for any nodes $v$ and $w$, and consequently for any leaf $\ell$, we have
\begin{align*} 
& \  & Q^{\omega}_{v} \vec{z} &= \vec{y} + P^{\omega}_{v} ( \vec{z} - \vec{y} )  \\
& \Rightarrow  & Q^{\omega}_{w} Q^{\omega}_{v} \vec{z} &= \vec{y} + P^{\omega}_{w} P^{\omega}_{v} ( \vec{z} - \vec{y} ) \\
& \Rightarrow  & \mathcal{Q}^{\omega}_{\ell} \vec{z} &= \vec{y} + \mathcal{P}^{\omega}_{\ell} ( \vec{z} - \vec{y} ) \\
& \Rightarrow  & \sum_{\ell \in \mathcal{L} } w(\ell, r) \mathcal{Q}^{\omega}_{\ell} \vec{z} &= \sum_{\ell \in \mathcal{L} } w(\ell, r) \left( \vec{y} + \mathcal{P}^{\omega}_{\ell}( \vec{z} - \vec{y} ) \right),
\end{align*}
which demonstrates Equation (\ref{Eq:Q-omega}).

Therefore, by Lemma \ref{L:Natt-3.5}, we have that for any $\vec{z}$,
\[ \left( \mathcal{Q}^{\omega} \right)^{k} \vec{z} \to \vec{y} + Pr ( \vec{z} - \vec{y} ), \]
as $k \to \infty$, where $Pr$ is the projection onto $\mathcal{N}(A)$.  If $\vec{z} \in \mathcal{R}( A^{*})$, we have that $\vec{y} + Pr ( \vec{z} - \vec{y} )$ is the unique solution to the system of equations that is in $\mathcal{R}(A^{*})$, and hence is the solution of minimal norm.
\end{proof}


We can see that for $\vec{z} \in \mathcal{R}(A^{*})$, the convergence rate of $\left( \mathcal{Q}^{\omega} \right)^{k} \vec{z} \to \vec{x}^{M}$ is linear, but we will formalize this in the next subsection (Corollary \ref{C:Fixed}).

\subsection{Inconsistent Equations}  \label{ssec:inconsistent}

We now consider the case of inconsistent systems of equations.  For this purpose, we must consider the relaxed version of the algorithm, as in the previous subsection.  Again, we assume $0<\omega<2$ and consider the limit
\[ \lim_{n \to \infty} \vec{x}^{(n)} = \vec{x}(\omega). \]
We will prove in Theorem \ref{Th:omega} and Corollary \ref{C:Fixed} that the limit exists, but unlike in the case of consistent systems, the limit will depend on $\omega$.  Moreover, we will prove in Theorem \ref{Th:omega-ic} that the limit
\[ \lim_{\omega \to 0} \vec{x}(\omega) = \vec{x}^{LS} \]
exists, and $\vec{x}^{LS}$ is a generalized solution which minimizes a weighted least-squares norm.  We follow the presentation of the analogous results for the classical Kaczmarz algorithm as presented in \cite{N-86}.  Indeed, we will proceed by analyzing the distributed  Kaczmarz algorithm using the ideas from Successive Over-Relaxation (SOR).  We need to follow the updates as they disperse through the tree, and also how the updates are pooled back at the root, and so we define the following quantities.  

We begin with reindexing the equations, which are currently indexed by the nodes as $S_{v} (\vec{x}) = b_{v}$.  As before, for each leaf $\ell$, we consider the path from the root $r$ to the leaf $\ell$, and index the corresponding equations as: 
\[ S_{\ell, 1}  (\vec{x}) = b_{\ell, 1}, \ \dots , S_{\ell, p_{\ell}} ( \vec{x} ) = b_{\ell, p_{\ell}}. \]

For each leaf $\ell$, we can define:
\begin{align*}
\mathcal{S}_{\ell} = \begin{pmatrix} S_{\ell, 1} \\ S_{\ell, 2} \\ \vdots \\ S_{\ell, p_{\ell}} \end{pmatrix}, \quad \vec{b}_{\ell} = \begin{pmatrix} b_{\ell, 1} \\ b_{\ell, 2} \\ \vdots \\ b_{\ell, p_{\ell}} \end{pmatrix}, \quad  \mathcal{D}_{\ell} = \begin{pmatrix} S_{\ell, 1} S_{\ell, 1}^{*} & 0 & \dots & 0 \\ 0 & S_{\ell, 2} S_{\ell, 2}^{*} & \dots & 0 \\ \vdots & \vdots & \ddots & \vdots \\ 0 &  0 & \dots & S_{\ell, p_{\ell}} S_{\ell, p_{\ell}}^{*} \end{pmatrix}
\end{align*}
and
\begin{align*}
\mathcal{L}_{\ell} = \begin{pmatrix} 0 & 0 & \dots & 0 & 0 \\ S_{\ell, 2} S_{\ell, 1}^{*} & 0 & \dots & 0 & 0 \\ \vdots & \vdots & \ddots & \vdots & \vdots \\ S_{\ell, p_{\ell}} S_{\ell, 1}^{*} &  S_{\ell, p_{\ell}} S_{\ell, 2}^{*} & \dots  & S_{\ell, p_{\ell}} S_{\ell, p_{\ell} - 1}^{*} & 0 \end{pmatrix}
\end{align*}
Then from input $\vec{x}^{(n)}$ at the root of the tree, the approximation at leaf $\ell$ after the dispersion stage in iteration $n$ is given by:
\begin{equation*}
\vec{x}^{(n)}_{\ell} = \mathcal{Q}^{\omega}_{\ell} \vec{x}^{(n)}  = \vec{x}^{(n)} + \sum_{j=1}^{p_{\ell}} S_{\ell, j}^{*} u_{j} = \vec{x}^{(n)} +  \mathcal{S}_{\ell}^{*} \vec{u},
\end{equation*}
where 
\begin{equation*}
\vec{u} := \left( u_{1} \dots u_{p_{\ell}} \right)^{T} = \omega \left( \mathcal{D}_{\ell} + \omega \mathcal{L}_{\ell} \right)^{-1} \left( \vec{b}_{\ell} - \mathcal{S}_{\ell} \vec{x}^{(n)} \right).
\end{equation*}
Therefore, we can write
\begin{align*}
 \vec{x}^{(n)}_{\ell} &= \vec{x}^{(n)} + \omega \mathcal{S}_{\ell}^{*} \left( \mathcal{D}_{\ell} + \omega \mathcal{L}_{\ell} \right)^{-1} \left( \vec{b}_{\ell} - \mathcal{S}_{\ell} \vec{x}^{(n)} \right) \\
 &= \left( I - \omega \mathcal{S}_{\ell}^{*} \left( \mathcal{D}_{\ell} + \omega \mathcal{L}_{\ell} \right)^{-1} \mathcal{S}_{\ell} \right) \vec{x}^{(n)} + \omega \mathcal{S}_{\ell}^{*} \left( \mathcal{D}_{\ell} + \omega \mathcal{L}_{\ell} \right)^{-1} \vec{b}_{\ell} .
\end{align*}

Combining these approximations back at the root yields:
\begin{align}
\vec{x}^{(n+1)} &= \sum_{\ell \in \mathcal{L}} w(\ell, r) \vec{x}^{(n)}_{\ell} \notag \\
&= \sum_{\ell \in \mathcal{L}} w(\ell, r) \left( I - \omega \mathcal{S}_{\ell}^{*} \left( \mathcal{D}_{\ell} + \omega \mathcal{L}_{\ell} \right)^{-1} \mathcal{S}_{\ell} \right) \vec{x}^{(n)} + \omega \sum_{\ell \in \mathcal{L}} w(\ell, r)  \mathcal{S}_{\ell}^{*} \left( \mathcal{D}_{\ell} + \omega \mathcal{L}_{\ell} \right)^{-1} \vec{b}_{\ell} \notag \\
&=  \left( I - \omega \sum_{\ell \in \mathcal{L}} w(\ell, r)  \mathcal{S}_{\ell}^{*} \left( \mathcal{D}_{\ell} + \omega \mathcal{L}_{\ell} \right)^{-1} \mathcal{S}_{\ell} \right) \vec{x}^{(n)} + \omega \sum_{\ell \in \mathcal{L}} w(\ell, r)  \mathcal{S}_{\ell}^{*} \left( \mathcal{D}_{\ell} + \omega \mathcal{L}_{\ell} \right)^{-1} \vec{b}_{\ell} \label{Eq:root-approx}.
\end{align}

We write
\begin{equation*}
\vec{x}^{(n+1)} = \sum_{\ell \in \mathcal{L}} w(\ell, r) \mathcal{B}^{\omega}_{\ell} \vec{x}^{(n)} + \sum_{\ell \in \mathcal{L} } w(\ell, r) \vec{\textbf{b}}^{\omega}_{\ell}
\end{equation*}
where
\begin{equation} \label{Eq:SOR-ell}
\mathcal{B}^{\omega}_{\ell} :=   I - \omega \mathcal{S}_{\ell}^{*} \left( \mathcal{D}_{\ell} + \omega \mathcal{L}_{\ell} \right)^{-1} \mathcal{S}_{\ell}; \qquad \vec{\textbf{b}}^{\omega}_{\ell} :=  \omega \mathcal{S}_{\ell}^{*} \left( \mathcal{D}_{\ell} + \omega \mathcal{L}_{\ell} \right)^{-1} \vec{b}_{\ell}.
\end{equation}
Written in this form, for each leaf $\ell$, the input at the root undergoes the linearly ordered Kaczmarz algorithm.  So, if the input at the root is $\vec{x}^{(n)}$, then the estimate at leaf $\ell$ is:
\[ \vec{x}^{(n)}_{\ell} = \mathcal{Q}^{\omega}_{\ell} \vec{x}^{(n)} = \mathcal{B}^{\omega}_{\ell} \vec{x}^{(n)} + \vec{\textbf{b}}^{\omega}_{\ell}. \]
As we shall see, for each leaf $\ell$ and $\omega \in (0,2)$, $\mathcal{B}^{\omega}_{\ell}$ has operator norm bounded by $1$, and the eigenvalues are either $1$ or strictly less than $1$ in magnitude.  We state these formally in Lemma \ref{L:B-Lip}.

We enumerate the leaves of the tree as $\ell_{1}, \dots, \ell_{t}$, and write: 
\begin{align*}
\mathcal{S} = \begin{pmatrix} \mathcal{S}_{\ell_{1}} \\ \vdots \\ \mathcal{S}_{\ell_{t}} \end{pmatrix}
\qquad 
\vec{\textbf{b}} = \begin{pmatrix} \vec{b}_{\ell_{1}} \\ \vdots \\ \vec{b}_{\ell_{t}} \end{pmatrix}
\end{align*}
The system of equations $A \vec{x} = \vec{b}$ becomes:
\begin{equation} \label{Eq:ampliated}
\mathcal{S} \vec{x} = \vec{\textbf{b}}
\end{equation}
where many of the equations are now repeated in Equation (\ref{Eq:ampliated}).  However, we have $\mathcal{N}(\mathcal{S}) = \mathcal{N}(A)$ and $\mathcal{R}(\mathcal{S}^{*}) = \mathcal{R}(A^{*})$.

We also write
\begin{equation} \label{Eq:mathcal-DL}
\mathcal{D} :=  \begin{pmatrix} \mathcal{D}_{\ell_{1}} & 0 & \dots & 0 \\ 0 & \mathcal{D}_{\ell_{2}} & \dots & 0 \\ \vdots & \vdots & \ddots & \vdots \\ 0  & 0 & \dots &  \mathcal{D}_{\ell_{t}} \end{pmatrix}
\qquad 
\mathcal{L} = \begin{pmatrix}  \mathcal{L}_{\ell_{1}} & 0 & \dots & 0 \\ 0 & \mathcal{L}_{\ell_{2}} & \dots & 0 \\ \vdots & \vdots & \ddots & \vdots \\ 0 & 0 & \dots &  \mathcal{L}_{\ell_{t}} \end{pmatrix}
\end{equation}
so
\begin{equation} \label{Eq:mathcal-DLinv}
\left( \mathcal{D}+ \omega \mathcal{L} \right)^{-1} = \begin{pmatrix} \left( \mathcal{D}_{\ell_{1}} + \omega \mathcal{L}_{\ell_{1}} \right)^{-1} & 0 & \dots & 0 \\ 0 & \left( \mathcal{D}_{\ell_{2}} + \omega \mathcal{L}_{\ell_{2}} \right)^{-1} & \dots & 0 \\ \vdots & \vdots & \ddots & \vdots \\ 0 & 0 & \dots & \left( \mathcal{D}_{\ell_{t}} + \omega \mathcal{L}_{\ell_{t}} \right)^{-1}
\end{pmatrix}
\end{equation}
We also define
\begin{equation} \label{Eq:mathcal-W}
\mathcal{W} =  \begin{pmatrix} w(\ell_{1}, r) I_{p_{\ell_{1}}}  & 0 & \dots & 0 \\ 0 & w(\ell_{2}, r) I_{p_{\ell_{2}}} & \dots & 0 \\ \vdots & \vdots & \ddots & \vdots \\ 0 & 0 & \dots & w(\ell_{t}, r) I_{p_{\ell_{t}}}
\end{pmatrix}
\end{equation}
Note that since $\mathcal{D} + \omega \mathcal{L}$ and $\mathcal{W}$ are block matrices with blocks of the same size, and in $\mathcal{W}$ the blocks are scalar multiples of the identity, we have that the two matrices commute:
\begin{equation} \label{Eq:commute}
\left( \mathcal{D}+ \omega \mathcal{L} \right)^{-1} \mathcal{W} = \mathcal{W} \left( \mathcal{D}+ \omega \mathcal{L} \right)^{-1} = \mathcal{W}^{1/2} \left( \mathcal{D}+ \omega \mathcal{L} \right)^{-1} \mathcal{W}^{1/2}.
\end{equation}

We can therefore write Equation (\ref{Eq:root-approx}) as
\begin{align*}
\vec{x}^{(n+1)} &\ = \left( I -  \omega \mathcal{S}^{*} \left( \mathcal{D} + \omega \mathcal{L} \right)^{-1} \mathcal{W} \mathcal{S} \right) \vec{x}^{(n)} + \omega \mathcal{S}^{*} \left( \mathcal{D} + \omega \mathcal{L} \right)^{-1} \mathcal{W} \vec{\textbf{b}}.  \\
&:= \mathcal{B}^{\omega} \vec{x}^{(n)} + \vec{\textbf{b}}^{\omega}.
\end{align*}

Note that $\mathcal{R}(\mathcal{S}^{*})$ is an invariant subspace for $\mathcal{B}^{\omega}$, and that $\vec{\textbf{b}}^{\omega} \in \mathcal{R}(\mathcal{S}^{*})$.  We let $\widehat{\mathcal{B}}^{\omega}$ denote the restriction of $\mathcal{B}^{\omega}$ to the subspace $\mathcal{R}(\mathcal{S}^{*})$.  As we shall see, provided the input $\vec{x}^{0} \in \mathcal{R}(\mathcal{S}^{*})$, the sequence $\vec{x}^{k}$ converges.  In fact, we will show that the transformation $\widehat{\mathcal{B}}^{\omega}$ is a contraction, and since $\vec{\textbf{b}}^{\omega} \in \mathcal{R}(\mathcal{S}^{*})$, then the mapping
\[ \vec{z} \mapsto \widehat{\mathcal{B}}^{\omega} \vec{z} + \vec{\textbf{b}}^{\omega} \]
has a unique fixed point within $\mathcal{R}(\mathcal{S}^{*})$.  We shall do so via a series of lemmas.

\begin{lemma} \label{L:B-Lip}
For each leaf $\ell$ and for $\omega \in (0,2)$, $\mathcal{B}^{\omega}_{\ell}$ is Lipschitz continuous with constant at most $1$ (i.e. it has operator norm at most $1$).  Consequently, $\widehat{\mathcal{B}}^{\omega}$ is also Lipschitz continuous with constant at most $1$.

Moreover, for each leaf $\ell$ and $\omega \in (0,2)$, if $\lambda$ is an eigenvalue of $\mathcal{B}^{\omega}_{\ell}$ with $| \lambda | = 1$, then $\lambda = 1$.  Consequently, any eigenvalue $\lambda \neq 1$ has the property $| \lambda | < 1$.
\end{lemma}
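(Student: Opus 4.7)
My plan is to first recognize that $\mathcal{B}^{\omega}_{\ell}$ coincides with the previously defined operator $\mathcal{P}^{\omega}_{\ell} = P^{\omega}_{\ell, p_\ell} \cdots P^{\omega}_{\ell, 1}$. The map $\mathcal{Q}^{\omega}_{\ell}$ has the affine decomposition $\mathcal{Q}^{\omega}_{\ell} = \mathcal{B}^{\omega}_{\ell} + \vec{\textbf{b}}^{\omega}_{\ell}$ from the SOR derivation preceding the lemma, so its Lipschitz part is $\mathcal{B}^{\omega}_{\ell}$. On the other hand, since $Q^{\omega}_v(\vec{z}) - Q^{\omega}_v(\vec{z}') = P^{\omega}_v(\vec{z} - \vec{z}')$ for each node $v$, composing along the path gives $\mathcal{Q}^{\omega}_{\ell}(\vec{z}) - \mathcal{Q}^{\omega}_{\ell}(\vec{z}') = \mathcal{P}^{\omega}_{\ell}(\vec{z} - \vec{z}')$, and the two identifications force $\mathcal{B}^{\omega}_{\ell} = \mathcal{P}^{\omega}_{\ell}$.

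Given this identification, the Lipschitz bound on $\mathcal{B}^{\omega}_{\ell}$ is immediate from Lemma \ref{L:Q-Lip} (or directly, each $P^{\omega}_v = (1-\omega) I + \omega P_v$ has operator norm at most $1$ for $\omega \in (0,2)$, and the bound is preserved under composition). Next, using the derivation of Equation (\ref{Eq:root-approx}) one has $\mathcal{B}^{\omega} = \sum_{\ell \in \mathcal{L}} w(\ell,r)\, \mathcal{B}^{\omega}_{\ell}$, so $\|\mathcal{B}^{\omega}\| \leq \sum_{\ell} w(\ell,r)\,\|\mathcal{B}^{\omega}_{\ell}\| \leq 1$ by Equation (\ref{Eq:total-weights}). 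The bound passes to the restriction $\widehat{\mathcal{B}}^{\omega}$.

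For the eigenvalue statement, the crux is the following strict-contraction observation for $\omega \in (0,2)$: decomposing $\vec{z} = \vec{z}_1 + \vec{z}_2$ with $\vec{z}_1 \in \mathcal{N}(S_v)$ and $\vec{z}_2 \in \mathcal{N}(S_v)^\perp$, one computes
\begin{equation*}
\|P^{\omega}_v \vec{z}\|^2 = \|\vec{z}_1\|^2 + (1-\omega)^2 \|\vec{z}_2\|^2,
\end{equation*}
so $\|P^{\omega}_v \vec{z}\| = \|\vec{z}\|$ holds iff $\vec{z}_2 = 0$, and in that case $P^{\omega}_v \vec{z} = \vec{z}$. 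Now suppose $\mathcal{B}^{\omega}_{\ell} \vec{z} = \lambda \vec{z}$ with $|\lambda| = 1$. Then $\|\vec{z}\| = \|\mathcal{P}^{\omega}_{\ell} \vec{z}\|$; peeling the composition from the inside out, each intermediate norm $\|P^{\omega}_{\ell,j} \cdots P^{\omega}_{\ell,1} \vec{z}\|$ must equal $\|\vec{z}\|$ (otherwise the final norm would strictly decrease). Applying the observation at each factor inductively yields $P^{\omega}_{\ell, j} \vec{z} = \vec{z}$ for every $j$, hence $\mathcal{P}^{\omega}_{\ell} \vec{z} = \vec{z}$ and $\lambda = 1$. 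The consequence for any eigenvalue $\lambda \ne 1$ follows since $|\lambda| \leq \|\mathcal{B}^{\omega}_{\ell}\| \leq 1$ and equality is ruled out.

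The main obstacle I anticipate is organizational rather than computational: making explicit the identification $\mathcal{B}^{\omega}_{\ell} = \mathcal{P}^{\omega}_{\ell}$, since the two operators are defined quite differently (one through a block lower-triangular SOR formula, the other through a composition of relaxed projections), but once this is pinned down, both parts reduce to the standard projection-composition argument.
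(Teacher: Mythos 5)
Your proof is correct, and the first half (the Lipschitz bound via $\|\mathcal{B}^{\omega}_{\ell}\vec{z}_{1}-\mathcal{B}^{\omega}_{\ell}\vec{z}_{2}\|=\|\mathcal{Q}^{\omega}_{\ell}\vec{z}_{1}-\mathcal{Q}^{\omega}_{\ell}\vec{z}_{2}\|$ and Lemma \ref{L:Q-Lip}, then the convex combination for $\widehat{\mathcal{B}}^{\omega}$) is exactly the paper's argument. Where you genuinely diverge is the eigenvalue claim: the paper disposes of it in one line by citing Lemma V.3.9 of \cite{N-86}, which is a statement about the SOR iteration matrix $I-\omega\mathcal{S}_{\ell}^{*}(\mathcal{D}_{\ell}+\omega\mathcal{L}_{\ell})^{-1}\mathcal{S}_{\ell}$, whereas you first pin down the identification $\mathcal{B}^{\omega}_{\ell}=\mathcal{P}^{\omega}_{\ell}$ (two affine maps that agree have equal linear parts) and then run a direct norm-rigidity argument: $\|P^{\omega}_{v}\vec{z}\|^{2}=\|\vec{z}_{1}\|^{2}+(1-\omega)^{2}\|\vec{z}_{2}\|^{2}$ forces any norm-preserved vector to be fixed by each factor, so a unimodular eigenvalue must be $1$. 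Both are sound; the citation is shorter, but your route is self-contained and has the side benefit of making explicit the identity $\mathcal{B}^{\omega}_{\ell}=\mathcal{P}^{\omega}_{\ell}$, which the paper uses tacitly elsewhere (for instance in the proof of Theorem \ref{Th:omega}, where Lemma \ref{L:Natt-3.4} --- a statement about $\mathcal{P}^{\omega}$ --- is applied to a fixed vector of $\widehat{\mathcal{B}}^{\omega}$). Your inside-out peeling of the composition is the same mechanism that underlies Natterer's lemma, so nothing is lost in generality.
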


\begin{proof}
For input $\vec{z}_{i}$, we have that 
\[ \mathcal{Q}^{\omega}_{\ell} \vec{z}_{i} = \mathcal{B}^{\omega}_{\ell} \vec{z}_{i} + \vec{\textbf{b}}^{\omega}_{\ell}, \]
hence
\begin{equation*}
\| \mathcal{B}^{\omega}_{\ell} \vec{z}_{1} - \mathcal{B}^{\omega}_{\ell} \vec{z}_{2} \| = \| \mathcal{Q}^{\omega}_{\ell} \vec{z}_{1} - \mathcal{Q}^{\omega}_{\ell} \vec{z}_{2} \|  \leq \| \vec{z}_{1} - \vec{z}_{2} \|
\end{equation*}
by Lemma \ref{L:Q-Lip}.  Since $\mathcal{B}^{\omega}$ is a convex combination of the $\mathcal{B}^{\omega}_{\ell}$, it also has Lipschitz constant at most $1$.  The last conclusion follows from \cite[Lemma V.3.9]{N-86}.
\end{proof}

\begin{theorem} \label{Th:omega}
The spectral radius of $\widehat{\mathcal{B}}^{\omega}$ is strictly less than $1$.
\end{theorem}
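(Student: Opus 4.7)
The plan is to identify the operator $\mathcal{B}^{\omega}$ with the linear operator $\mathcal{P}^{\omega}$ already analyzed in the consistent case, and then invoke Lemma \ref{L:Natt-3.5} on the invariant subspace $\mathcal{R}(\mathcal{S}^{*})$. The key observation is that $\mathcal{Q}^{\omega}_{\ell}$ is an affine map whose linear part is exactly $\mathcal{P}^{\omega}_{\ell}$ (since each $Q^{\omega}_{v}$ has linear part $P^{\omega}_{v}$, and composition of affine maps has linear part equal to the composition of the linear parts). Combined with the identity $\vec{x}^{(n)}_{\ell}=\mathcal{Q}^{\omega}_{\ell}\vec{x}^{(n)}=\mathcal{B}^{\omega}_{\ell}\vec{x}^{(n)}+\vec{\textbf{b}}^{\omega}_{\ell}$, subtracting the values at two inputs $\vec{z}_{1},\vec{z}_{2}$ yields $\mathcal{B}^{\omega}_{\ell}(\vec{z}_{1}-\vec{z}_{2})=\mathcal{P}^{\omega}_{\ell}(\vec{z}_{1}-\vec{z}_{2})$, so $\mathcal{B}^{\omega}_{\ell}=\mathcal{P}^{\omega}_{\ell}$ as linear operators. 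Taking the convex combination over leaves gives $\mathcal{B}^{\omega}=\mathcal{P}^{\omega}$.

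Next, I would use that $\mathcal{R}(\mathcal{S}^{*})=\mathcal{R}(A^{*})=\mathcal{N}(A)^{\perp}$, which has already been noted in the text. Hence $\widehat{\mathcal{B}}^{\omega}$ is nothing but $\mathcal{P}^{\omega}$ restricted to $\mathcal{N}(A)^{\perp}$. By Lemma \ref{L:Natt-3.5}, $(\mathcal{P}^{\omega})^{k}$ converges strongly, as $k\to\infty$, to the orthogonal projection $\Pi$ onto $\mathcal{N}(A)$. Restricted to $\mathcal{N}(A)^{\perp}$, $\Pi$ is identically zero, so
\[
\bigl(\widehat{\mathcal{B}}^{\omega}\bigr)^{k}\vec{z}\longrightarrow \boldzero\qquad\text{for every }\vec{z}\in\mathcal{R}(\mathcal{S}^{*}).
\]
Because we are working in finite dimensions, strong convergence of the iterates to $0$ is equivalent to convergence in operator norm: $\|(\widehat{\mathcal{B}}^{\omega})^{k}\|\to 0$.

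Finally, I would conclude using Gelfand's spectral radius formula. Since $\|(\widehat{\mathcal{B}}^{\omega})^{k}\|\to 0$, there exists $N$ with $\|(\widehat{\mathcal{B}}^{\omega})^{N}\|<1$, and then
\[
\rho\bigl(\widehat{\mathcal{B}}^{\omega}\bigr)=\lim_{k\to\infty}\bigl\|(\widehat{\mathcal{B}}^{\omega})^{k}\bigr\|^{1/k}\le \bigl\|(\widehat{\mathcal{B}}^{\omega})^{N}\bigr\|^{1/N}<1,
\]
which is the desired conclusion. (Alternatively, one can argue directly: $\|\widehat{\mathcal{B}}^{\omega}\|\le 1$ from Lemma \ref{L:B-Lip}, and if $\lambda$ were an eigenvalue with $|\lambda|=1$ and eigenvector $\vec{z}\in\mathcal{R}(\mathcal{S}^{*})\setminus\{\boldzero\}$, then $\|(\widehat{\mathcal{B}}^{\omega})^{k}\vec{z}\|=\|\vec{z}\|\not\to 0$, contradicting the norm convergence just established.)

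The only real subtlety, and thus the step most likely to require care in the write-up, is the identification $\mathcal{B}^{\omega}_{\ell}=\mathcal{P}^{\omega}_{\ell}$; once this is in hand, the theorem is a direct consequence of Lemma \ref{L:Natt-3.5} together with the observation that the asymptotic projection vanishes on the invariant subspace $\mathcal{R}(\mathcal{S}^{*})$. No new inequality or fixed-point argument is needed, and everything uses machinery already developed in subsection \ref{ssec:consistent}.
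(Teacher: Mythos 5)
Your proof is correct, but it takes a genuinely different route from the paper's. The paper argues directly on the spectrum: it takes an eigenvalue $\lambda$ of $\widehat{\mathcal{B}}^{\omega}$ with unit eigenvector $\vec{v}$, rules out $\lambda=1$ via Lemma \ref{L:Natt-3.4} (an eigenvector for $1$ would lie in $\mathcal{N}(A)\cap\mathcal{R}(\mathcal{S}^{*})=\{0\}$), bounds $|\lambda|\le\sum_{\ell}w(\ell,r)\,|\langle\mathcal{B}^{\omega}_{\ell}\vec{v},\vec{v}\rangle|\le 1$, and then eliminates $|\lambda|=1$ through the equality case of Cauchy--Schwarz: every $\langle\mathcal{B}^{\omega}_{\ell}\vec{v},\vec{v}\rangle$ would have to equal $\lambda$, forcing $(\vec{v},\lambda)$ to be an eigenpair of each $\mathcal{B}^{\omega}_{\ell}$, contradicting the second half of Lemma \ref{L:B-Lip}. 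You instead identify $\mathcal{B}^{\omega}_{\ell}$ with $\mathcal{P}^{\omega}_{\ell}$ as the unique linear part of the affine map $\mathcal{Q}^{\omega}_{\ell}$ --- an identification that is correct, and one the paper itself relies on only implicitly when it invokes Lemma \ref{L:Natt-3.4} for $\widehat{\mathcal{B}}^{\omega}$ --- and then import Lemma \ref{L:Natt-3.5} wholesale: the powers of $\mathcal{P}^{\omega}$ converge to the projection onto $\mathcal{N}(A)$, which vanishes on the invariant subspace $\mathcal{R}(\mathcal{S}^{*})=\mathcal{N}(A)^{\perp}$, so $\|(\widehat{\mathcal{B}}^{\omega})^{k}\|\to 0$ in finite dimensions and Gelfand's formula (or your direct eigenvector argument) finishes. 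There is no circularity, since Lemma \ref{L:Natt-3.5} is established before and independently of Theorem \ref{Th:omega}. Your route is shorter and has the virtue of making the relation $\mathcal{B}^{\omega}=\mathcal{P}^{\omega}$ explicit, but it pushes the real analytic work into Lemma \ref{L:Natt-3.5}, whose proof rests on the same norm-equality considerations (Lemmas \ref{L:Natt-3.2a} and \ref{L:Natt-3.4}) that the paper's Cauchy--Schwarz argument carries out by hand; the paper's version also delivers the per-leaf eigenvalue information about each $\mathcal{B}^{\omega}_{\ell}$ as a byproduct.
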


\begin{proof}
For each leaf $\ell$, Lemma \ref{L:B-Lip} implies that
\begin{equation} \label{Eq:norm-1}
\| \mathcal{B}^{\omega}_{\ell} \| \leq 1, \qquad | \langle \mathcal{B}^{\omega}_{\ell} \vec{v}, \vec{v} \rangle | \leq \| \vec{v} \|^2. 
\end{equation}
Let $\lambda$ be an eigenvalue for $\widehat{\mathcal{B}}^{\omega}$.  We must have $\lambda \neq 1$; if it were not so, then there exists a nonzero $\vec{z} \in \mathcal{R}(\mathcal{S}^{*})$ with $\widehat{\mathcal{B}}^{\omega} \vec{z} = \vec{z}$.  However, by Lemma \ref{L:Natt-3.4} we must have $\vec{z} \in \mathcal{N}(A) = \mathcal{N}(S)$ which is a contradiction.  Let $\vec{v}$ be a unit norm eigenvector for $\lambda$.  We have
\begin{equation*}
| \lambda | = | \langle \widehat{\mathcal{B}}^{\omega} \vec{v}, \vec{v} \rangle | \leq \sum_{\ell \in \mathcal{L}} w(\ell, r) | \langle \mathcal{B}_{\ell}^{\omega} \vec{v}, \vec{v} \rangle | \leq 1.
\end{equation*}

Now suppose that $| \lambda | = 1$, then we similarly obtain
\begin{equation}
\lambda = \sum_{\ell \in \mathcal{L} } \langle \mathcal{B}_{\ell}^{\omega} \vec{v}, \vec{v} \rangle
\end{equation}
from which we deduce that the argument of the complex number $\langle \mathcal{B}_{\ell}^{\omega} \vec{v}, \vec{v} \rangle$ is independent of the leaf $\ell$.  Therefore, we must have for every leaf $\ell$
\begin{equation} \label{Eq:lambda-ell}
\langle \mathcal{B}_{\ell}^{\omega} \vec{v}, \vec{v} \rangle = \lambda.
\end{equation}
However, we know by the Cauchy-Schwarz inequality that equality in Equation (\ref{Eq:lambda-ell}) can only occur when $(\vec{v}, \lambda)$ is an eigenvector/eigenvalue pair for $\mathcal{B}_{\ell}^{\omega}$.  However,  Lemma \ref{L:B-Lip} implies that none of the leaves $\ell$ have the property that $\lambda$ is an eigenvalue, so we have arrived at a contradiction.
\end{proof}

\begin{corollary} \label{C:Fixed}
For $\omega \in (0,2)$ and for any initial input $\vec{x}^{(0)} \in \mathcal{R}(\mathcal{S}^{*})$, we have that the sequence given by
\begin{equation} \label{Eq:Bw-fixed}  
\vec{x}^{(n+1)} = \widehat{\mathcal{B}}^{\omega} \vec{x}^{(n)} + \vec{\textbf{\textup{b}}}^{\omega} 
\end{equation}
converges to a unique point in $\mathcal{R}(\mathcal{S}^{*})$, independent of $\vec{x}^{(0)}$, and the convergence rate is linear.
\end{corollary}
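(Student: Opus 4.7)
The plan is to leverage Theorem \ref{Th:omega}, which tells us that $\widehat{\mathcal{B}}^{\omega}$ has spectral radius strictly less than $1$ on the invariant subspace $\mathcal{R}(\mathcal{S}^{*})$. From this, standard facts about linear iterations on finite-dimensional spaces immediately give both convergence and a linear rate, so the work is mainly in assembling the pieces and checking the invariance properties.

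First I would verify that the iteration stays in $\mathcal{R}(\mathcal{S}^{*})$. The recursion $\vec{x}^{(n+1)} = \mathcal{B}^{\omega} \vec{x}^{(n)} + \vec{\textbf{b}}^{\omega}$ satisfies $\vec{\textbf{b}}^{\omega} = \omega \mathcal{S}^{*}(\mathcal{D}+\omega \mathcal{L})^{-1}\mathcal{W}\vec{\textbf{b}} \in \mathcal{R}(\mathcal{S}^{*})$, and by inspection of $\mathcal{B}^{\omega} = I - \omega \mathcal{S}^{*}(\mathcal{D}+\omega\mathcal{L})^{-1}\mathcal{W}\mathcal{S}$, the subspace $\mathcal{R}(\mathcal{S}^{*})$ is invariant. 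Hence if $\vec{x}^{(0)} \in \mathcal{R}(\mathcal{S}^{*})$, then $\vec{x}^{(n)} \in \mathcal{R}(\mathcal{S}^{*})$ for all $n$, and the iteration coincides with $\vec{x}^{(n+1)} = \widehat{\mathcal{B}}^{\omega}\vec{x}^{(n)} + \vec{\textbf{b}}^{\omega}$.

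Next, since Theorem \ref{Th:omega} asserts $\rho := \text{spr}(\widehat{\mathcal{B}}^{\omega}) < 1$, the operator $I - \widehat{\mathcal{B}}^{\omega}$ is invertible on $\mathcal{R}(\mathcal{S}^{*})$ (no eigenvalue equals $1$). Define the unique fixed point
\begin{equation*}
\vec{x}^{*} := \bigl(I - \widehat{\mathcal{B}}^{\omega}\bigr)^{-1}\vec{\textbf{b}}^{\omega} \in \mathcal{R}(\mathcal{S}^{*}).
\end{equation*}
Subtracting $\vec{x}^{*} = \widehat{\mathcal{B}}^{\omega}\vec{x}^{*} + \vec{\textbf{b}}^{\omega}$ from the iteration gives the telescoping relation
\begin{equation*}
\vec{x}^{(n)} - \vec{x}^{*} = \bigl(\widehat{\mathcal{B}}^{\omega}\bigr)^{n}\bigl(\vec{x}^{(0)} - \vec{x}^{*}\bigr).
\end{equation*}

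Finally, I would invoke the standard fact that on a finite-dimensional space, for any $\rho < \sigma < 1$ there exists a constant $C > 0$ (depending on $\widehat{\mathcal{B}}^{\omega}$ and $\sigma$) such that $\|(\widehat{\mathcal{B}}^{\omega})^{n}\| \leq C\sigma^{n}$; equivalently, one can equip $\mathcal{R}(\mathcal{S}^{*})$ with an equivalent norm under which $\widehat{\mathcal{B}}^{\omega}$ is a strict contraction, and then apply the Banach fixed point theorem. Either route yields
\begin{equation*}
\|\vec{x}^{(n)} - \vec{x}^{*}\| \leq C \sigma^{n}\,\|\vec{x}^{(0)} - \vec{x}^{*}\|,
\end{equation*}
which is convergence with linear order, independent of $\vec{x}^{(0)}$. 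There is no real obstacle once Theorem \ref{Th:omega} is in hand; the only subtle point worth stating carefully is that uniqueness of the fixed point is ensured by restricting to $\mathcal{R}(\mathcal{S}^{*})$, since on the full space $\mathcal{B}^{\omega}$ fixes every vector in $\mathcal{N}(A) = \mathcal{N}(\mathcal{S})$ and so has a nontrivial affine set of fixed points.
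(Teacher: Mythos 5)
Your proposal is correct and follows exactly the route the paper intends: the corollary is presented as an immediate consequence of Theorem \ref{Th:omega} (spectral radius of $\widehat{\mathcal{B}}^{\omega}$ strictly less than $1$), combined with the invariance of $\mathcal{R}(\mathcal{S}^{*})$ and the membership $\vec{\textbf{b}}^{\omega} \in \mathcal{R}(\mathcal{S}^{*})$, both of which the paper notes just before stating the corollary. Your filling in of the standard details (the fixed point $(I - \widehat{\mathcal{B}}^{\omega})^{-1}\vec{\textbf{b}}^{\omega}$, the bound $\|(\widehat{\mathcal{B}}^{\omega})^{n}\| \leq C\sigma^{n}$, and the remark on why restricting to $\mathcal{R}(\mathcal{S}^{*})$ is needed for uniqueness) is accurate and complete.
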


The following can be found in \cite[Theorem IV.1.1]{N-86}:

\begin{lemma} \label{L:omega-ic}
For each $\omega \in (0,2)$, let 
\[ \vec{x}(\omega) = \lim_{n \to \infty} \vec{x}^{(n)} \]
where $\vec{x}^{(n)}$ are as in Equation (\ref{Eq:Bw-fixed}).  Then, $\vec{x}(\omega)$ is the unique vector that satisfies the conditions
\begin{equation} \label{Eq:x-omega}
 \mathcal{S}^{*} \left(  \mathcal{D} + \omega \mathcal{L}  \right)^{-1} \mathcal{W} \left( \vec{\textbf{\textup{b}}} - \mathcal{S} \vec{z} \right) = 0; \qquad \vec{z} \in \mathcal{R}(\mathcal{S}^{*}).
\end{equation}
\end{lemma}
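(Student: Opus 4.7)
The plan is to identify $\vec{x}(\omega)$ as the unique fixed point in $\mathcal{R}(\mathcal{S}^{*})$ of the affine contraction $T: \vec{z} \mapsto \widehat{\mathcal{B}}^{\omega} \vec{z} + \vec{\textbf{b}}^{\omega}$, and then to check by a short algebraic manipulation that the fixed-point equation $T\vec{z} = \vec{z}$ is equivalent, on $\mathcal{R}(\mathcal{S}^{*})$, to condition (\ref{Eq:x-omega}). Existence of the limit and its uniqueness as a fixed point within $\mathcal{R}(\mathcal{S}^{*})$ are already supplied by Corollary \ref{C:Fixed}, together with Theorem \ref{Th:omega} which ensures $\widehat{\mathcal{B}}^{\omega}$ has spectral radius strictly less than $1$; so the task reduces to transferring this uniqueness from the fixed-point formulation to (\ref{Eq:x-omega}).

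Using the explicit expressions
\[ \mathcal{B}^{\omega} = I - \omega \mathcal{S}^{*}(\mathcal{D} + \omega \mathcal{L})^{-1} \mathcal{W} \mathcal{S}, \qquad \vec{\textbf{b}}^{\omega} = \omega \mathcal{S}^{*}(\mathcal{D} + \omega \mathcal{L})^{-1} \mathcal{W} \vec{\textbf{b}} \]
derived just before Lemma \ref{L:B-Lip}, I would substitute into $\widehat{\mathcal{B}}^{\omega} \vec{z} + \vec{\textbf{b}}^{\omega} = \vec{z}$, cancel $\vec{z}$ from both sides, and combine the two surviving terms to obtain
\[ \omega\, \mathcal{S}^{*}(\mathcal{D} + \omega \mathcal{L})^{-1} \mathcal{W} \bigl(\vec{\textbf{b}} - \mathcal{S}\vec{z}\bigr) = 0, \]
which is exactly (\ref{Eq:x-omega}) after dividing by $\omega \neq 0$. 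Each step is reversible, so a vector $\vec{z} \in \mathcal{R}(\mathcal{S}^{*})$ satisfies (\ref{Eq:x-omega}) if and only if $T\vec{z} = \vec{z}$. Consequently $\vec{x}(\omega)$ is the unique element of $\mathcal{R}(\mathcal{S}^{*})$ satisfying (\ref{Eq:x-omega}).

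I do not anticipate a serious obstacle: the lemma is essentially an unwinding of the definitions of $\mathcal{B}^{\omega}$ and $\vec{\textbf{b}}^{\omega}$ combined with the already-established contraction property of $\widehat{\mathcal{B}}^{\omega}$. The only bookkeeping point to emphasize is that the equivalence is asserted within the invariant subspace $\mathcal{R}(\mathcal{S}^{*})$ — indeed $\vec{\textbf{b}}^{\omega} \in \mathcal{R}(\mathcal{S}^{*})$ and $\mathcal{B}^{\omega}$ preserves this subspace, so the iteration stays inside it and the restriction $\widehat{\mathcal{B}}^{\omega}$ is well-defined, which is what makes the uniqueness in Corollary \ref{C:Fixed} transfer cleanly to (\ref{Eq:x-omega}).
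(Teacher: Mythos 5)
Your argument is correct. Note that the paper itself offers no proof of this lemma --- it simply points to \cite[Theorem IV.1.1]{N-86}, which treats the analogous statement for the classical SOR/Kaczmarz iteration --- so what you have written is a self-contained verification of exactly the fact that citation is meant to cover, adapted to the weighted operators $\mathcal{S}$, $\mathcal{D}+\omega\mathcal{L}$, $\mathcal{W}$ of the distributed setting. The two ingredients you use are the right ones: Corollary \ref{C:Fixed} (backed by Theorem \ref{Th:omega}) gives that the iteration converges to the unique fixed point of $\vec{z}\mapsto\widehat{\mathcal{B}}^{\omega}\vec{z}+\vec{\textbf{b}}^{\omega}$ in $\mathcal{R}(\mathcal{S}^{*})$ (uniqueness because $\rho(\widehat{\mathcal{B}}^{\omega})<1$ makes $I-\widehat{\mathcal{B}}^{\omega}$ injective there, or equivalently because any fixed point is a legitimate initial iterate and hence equals the common limit), and the substitution of $\mathcal{B}^{\omega}=I-\omega\mathcal{S}^{*}(\mathcal{D}+\omega\mathcal{L})^{-1}\mathcal{W}\mathcal{S}$ and $\vec{\textbf{b}}^{\omega}=\omega\mathcal{S}^{*}(\mathcal{D}+\omega\mathcal{L})^{-1}\mathcal{W}\vec{\textbf{b}}$ into the fixed-point equation collapses, after cancelling $\vec{z}$ and dividing by $\omega\neq 0$, to precisely (\ref{Eq:x-omega}); every step is indeed an equivalence. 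Your closing remark about the invariance of $\mathcal{R}(\mathcal{S}^{*})$ under $\mathcal{B}^{\omega}$ and the membership $\vec{\textbf{b}}^{\omega}\in\mathcal{R}(\mathcal{S}^{*})$ is the correct bookkeeping needed for the restriction $\widehat{\mathcal{B}}^{\omega}$ to make sense. No gaps.
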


\begin{theorem} \label{Th:omega-ic}
For each $\omega \in (0,2)$, let
\[ \vec{x}(\omega) = \lim_{n \to \infty} \vec{x}^{n} \]
as in Equation (\ref{Eq:Bw-fixed}).  Then,
\[ \lim_{\omega \to 0} \vec{x}(\omega) = \vec{x}^{LS} \]
where $\vec{x}^{LS}$ minimizes the functional
\begin{equation} \label{Eq:functional} 
\vec{z} \mapsto \langle \mathcal{D}^{-1} \mathcal{W} ( \vec{\textbf{\textup{b}}} - \mathcal{S} \vec{z} ) , ( \vec{\textbf{\textup{b}}} - \mathcal{S} \vec{z} ) \rangle.
\end{equation}
\end{theorem}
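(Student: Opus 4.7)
The plan is to leverage Lemma \ref{L:omega-ic}, which already characterizes $\vec{x}(\omega)$ as the unique vector in $\mathcal{R}(\mathcal{S}^{*})$ satisfying
$$ \mathcal{S}^{*} \left(\mathcal{D} + \omega \mathcal{L}\right)^{-1} \mathcal{W} \bigl(\vec{\textbf{b}} - \mathcal{S}\vec{z}\bigr) = 0. $$
Observe that $\mathcal{D} + \omega \mathcal{L}$ is lower triangular with the strictly positive diagonal of $\mathcal{D}$, so $(\mathcal{D}+\omega\mathcal{L})^{-1}$ depends continuously on $\omega$ on all of $[0,2)$; sending $\omega \to 0$ formally yields the normal equation
$$ \mathcal{S}^{*} \mathcal{D}^{-1} \mathcal{W} \bigl(\vec{\textbf{b}} - \mathcal{S}\vec{z}\bigr) = 0. $$
The proof amounts to (a) showing this limiting equation has a unique solution in $\mathcal{R}(\mathcal{S}^{*})$, (b) identifying that solution with the minimizer of the functional in (\ref{Eq:functional}), and (c) passing to the limit in the representation formula for $\vec{x}(\omega)$.

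For (a) and (b), since $\mathcal{D}$ and $\mathcal{W}$ are diagonal with positive entries (cf.\ Equations (\ref{Eq:mathcal-DL})--(\ref{Eq:mathcal-W})), the operator $\mathcal{D}^{-1}\mathcal{W}$ is self-adjoint and positive definite, and one may write the functional as $f(\vec{z}) = \bigl\|\mathcal{W}^{1/2}\mathcal{D}^{-1/2}(\vec{\textbf{b}}-\mathcal{S}\vec{z})\bigr\|^{2}$. This is a convex quadratic whose critical points are exactly the zeros of its gradient, i.e.\ the solutions of the displayed normal equation. To see that $M := \mathcal{S}^{*}\mathcal{D}^{-1}\mathcal{W}\mathcal{S}$ is injective on $\mathcal{R}(\mathcal{S}^{*})$, I would use Equation (\ref{Eq:commute}) at $\omega=0$: if $M\vec{z}=0$ and $\vec{z}\in\mathcal{R}(\mathcal{S}^{*})$, then $\langle M\vec{z},\vec{z}\rangle = \|\mathcal{D}^{-1/2}\mathcal{W}^{1/2}\mathcal{S}\vec{z}\|^{2}=0$, forcing $\mathcal{S}\vec{z}=0$; but $\mathcal{N}(\mathcal{S})=\mathcal{N}(A)=\mathcal{R}(\mathcal{S}^{*})^{\perp}$, so $\vec{z}=0$. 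Hence $M|_{\mathcal{R}(\mathcal{S}^{*})}$ is a bijection, yielding a unique $\vec{x}^{LS}\in\mathcal{R}(\mathcal{S}^{*})$ that satisfies the normal equation, and this vector minimizes $f$.

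For (c), define $M(\omega) := \mathcal{S}^{*}(\mathcal{D}+\omega\mathcal{L})^{-1}\mathcal{W}\mathcal{S}$ and $\vec{c}(\omega) := \mathcal{S}^{*}(\mathcal{D}+\omega\mathcal{L})^{-1}\mathcal{W}\vec{\textbf{b}}$. The uniqueness clause of Lemma \ref{L:omega-ic} implies $M(\omega)|_{\mathcal{R}(\mathcal{S}^{*})}$ is injective for each $\omega\in(0,2)$ (two solutions in $\mathcal{R}(\mathcal{S}^{*})$ would contradict uniqueness), hence invertible, and $\vec{x}(\omega) = \bigl(M(\omega)|_{\mathcal{R}(\mathcal{S}^{*})}\bigr)^{-1}\vec{c}(\omega)$. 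The same formula holds at $\omega=0$ with $\vec{x}(0):=\vec{x}^{LS}$ by (a). Since $\omega\mapsto(\mathcal{D}+\omega\mathcal{L})^{-1}$ is continuous at $0$, both $M(\omega)$ and $\vec{c}(\omega)$ are continuous in $\omega$ at $0$; matrix inversion is continuous on the open set of invertible operators, so $\vec{x}(\omega)\to \vec{x}^{LS}$ as $\omega\to 0$.

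The main obstacle I anticipate is the step that establishes invertibility of $M(0)$ on the invariant subspace $\mathcal{R}(\mathcal{S}^{*})$; this is where self-adjointness of $\mathcal{D}^{-1}\mathcal{W}$ (available at $\omega=0$ via Equation (\ref{Eq:commute})) is essential and has no analogue for $\omega>0$. Once this linear-algebraic fact is in hand, the rest is a continuity argument applied to the explicit formula provided by Lemma \ref{L:omega-ic}.
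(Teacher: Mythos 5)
Your proposal is correct and follows essentially the same route as the paper: characterize $\vec{x}(\omega)$ via Lemma \ref{L:omega-ic}, define $\vec{x}^{LS}$ by the limiting normal equation $\mathcal{S}^{*}\mathcal{D}^{-1}\mathcal{W}(\vec{\textbf{b}}-\mathcal{S}\vec{z})=0$ on $\mathcal{R}(\mathcal{S}^{*})$, and pass to the limit using continuity of $(\mathcal{D}+\omega\mathcal{L})^{-1}$ at $\omega=0$. In fact you supply details the paper leaves implicit, namely the invertibility of $\mathcal{S}^{*}\mathcal{D}^{-1}\mathcal{W}\mathcal{S}$ on $\mathcal{R}(\mathcal{S}^{*})$ and the identification of the solution of the normal equation with the minimizer of the functional (\ref{Eq:functional}).
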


\begin{proof}
Let $\vec{x}^{LS}$ be the unique vector that satisfies the conditions
\begin{equation} \label{Eq:x-LS} 
\mathcal{S}^{*} \mathcal{D}^{-1} \mathcal{W} \left( \vec{\textbf{b}} - \mathcal{S} \vec{x}^{LS} \right) = 0; \quad \vec{x}^{LS} \in \mathcal{R}(\mathcal{S}^{*}).
\end{equation}

We have that $\vec{x}(\omega)$, as the unique solution of Equation (\ref{Eq:x-omega}) and $\vec{x}^{LS}$, as the unique solution of Equation (\ref{Eq:x-LS}), satisfy
\[ \vec{x}(\omega) = \vec{x}^{LS} + 0(\omega). \]
Indeed, this follows from the fact that $\left( \mathcal{D} + \omega \mathcal{L} \right)^{-1} \to \mathcal{D}^{-1}$ as $\omega \to 0$, together with the fact that $\vec{x}(\omega), \vec{x}^{LS} \in \mathcal{R}(\mathcal{S}^{*})$.
\end{proof}

We can re-write Equation (\ref{Eq:functional}) in the following way:
\begin{equation} \label{Eq:new-functional}
\vec{z} \mapsto \langle D^{-1} V ( \vec{b} - A \vec{z} ) , ( \vec{b} - A \vec{z} ) \rangle
\end{equation}
where $D$ is the diagonal matrix with entries given by $\| \vec{a}_{v} \|^{2}$, and $V$ is the diagonal matrix whose entry for node $v$ is given by:
\[ V_{vv} = \sum_{\substack{ \ell \in \mathcal{L}  \\ \ell \prec v } } w(\ell, r). \]

\begin{remark}
As we mentioned in our discussion of related work, we can view the Kaczmarz algorithm that we have defined for tree-based data as a distributed optimization problem.  In this view, the objective function is given by Equation (\ref{Eq:new-functional}).  We emphasize here that, unlike existing distributed gradient descent algorithms~\cite{nedic2009distributed,yuan2016convergence}, we are able to establish convergence without the strong convexity assumption.  Indeed, in the case of real data, the Hessian of our objective function is $A^{*}D^{-1} V A$, which is nonnegative but need not be strictly positive.  Moreover, our convergence guarantee is valid in the complex case.
\end{remark}

\subsection{Distributed Solutions}

For each node $v$ in the tree, the sequence of approximations $\vec{x}_{v}^{(n)}$ and $\vec{y}_{v}^{(n)}$ will have a limit, i.e. the following limits exist:
\begin{equation} \label{Eq:vertex-limit}
\lim_{n \to \infty} \vec{x}_{v}^{(n)} = \vec{x}_{v}; \qquad \lim_{n \to \infty} \vec{y}_{v}^{(n)} = \vec{y}_{v}.
\end{equation}
In the relaxed case, these limits may depend on the relaxation parameter $\omega$; if so we will denote this dependence by $\vec{x}_{v}(\omega)$ and $\vec{y}_{v}(\omega)$.

\begin{corollary}
If the system of equations $A \vec{x} = \vec{b}$ is consistent, then for every node $v$ and every $\omega \in (0,2)$, the limits $\vec{x}_{v}$ and $\vec{y}_{v}$ as in Equation (\ref{Eq:vertex-limit}) equal $\vec{x}^{M}$, the solution of minimal norm.
\end{corollary}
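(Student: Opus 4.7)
The plan is to deduce the corollary from Theorem \ref{Th:omega-c} by propagating the convergence at the root outward through the tree in two stages. Theorem \ref{Th:omega-c} (applied with the standing hypothesis $\vec{x}^{(0)} \in \mathcal{R}(A^{*})$) supplies the base case $\vec{x}^{(n)}_{r} = \vec{x}^{(n)} \to \vec{x}^{M}$. From there, the dispersion values $\vec{x}^{(n)}_{v}$ are handled by induction along paths from $r$, and the pooling values $\vec{y}^{(n)}_{v}$ by induction from the leaves back to $r$.

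For the dispersion stage, the key observation is that $\vec{x}^{M}$ is a common fixed point of every $Q_{w}^{\omega}$. Since $\vec{x}^{M}$ satisfies every equation, $S_{w} \vec{x}^{M} = b_{w}$, which gives $Q_{w} \vec{x}^{M} = \vec{x}^{M}$ and hence
\[ Q_{w}^{\omega} \vec{x}^{M} = (1-\omega) \vec{x}^{M} + \omega Q_{w} \vec{x}^{M} = \vec{x}^{M}. \]
Combining this with the Lipschitz-$1$ bound from Lemma \ref{L:Q-Lip}, for any node $v$ with immediate predecessor $u$ we obtain
\[ \| \vec{x}^{(n)}_{v} - \vec{x}^{M} \| = \| Q_{v}^{\omega} \vec{x}^{(n)}_{u} - Q_{v}^{\omega} \vec{x}^{M} \| \leq \| \vec{x}^{(n)}_{u} - \vec{x}^{M} \|. \]
Induction along the path from $r$ to $v$ then yields $\vec{x}^{(n)}_{v} \to \vec{x}^{M}$.

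For the pooling stage, I would induct from the leaves toward the root. At each leaf $\ell$, $\vec{y}^{(n)}_{\ell} = \vec{x}^{(n)}_{\ell} \to \vec{x}^{M}$ by the dispersion argument just established. For an interior node $v$, the pooling update
\[ \vec{y}^{(n)}_{v} = \sum_{u \in \mathcal{C}(v)} w(u,v)\, \vec{y}^{(n)}_{u} \]
is a convex combination (the weights $w(u,v)$ are nonnegative and sum to $1$), so if every immediate successor $u$ satisfies $\vec{y}^{(n)}_{u} \to \vec{x}^{M}$, then $\vec{y}^{(n)}_{v} \to \vec{x}^{M}$ as well.

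There is no real obstacle here; the content of the corollary is just that $\vec{x}^{M}$ is a common fixed point of every map appearing in either stage of the algorithm, so once the root estimate converges to $\vec{x}^{M}$ the convergence propagates through the tree by continuity. The only care needed is to do the two inductions in the correct direction (root-to-leaf for dispersion, leaf-to-root for pooling) and to observe that both the affine maps $Q_{v}^{\omega}$ and the convex pooling are non-expansive at the limit point $\vec{x}^{M}$.
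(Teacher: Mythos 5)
Your proposal is correct and follows essentially the same route as the paper: both deduce the root convergence from Theorem \ref{Th:omega-c}, propagate it through the dispersion stage using the fact that $\vec{x}^{M}$ is a fixed point of each $Q_{v}^{\omega}$ together with the continuity (Lipschitz-$1$) of these maps, and handle the pooling stage by observing that each $\vec{y}^{(n)}_{v}$ is a convex combination of quantities converging to $\vec{x}^{M}$. The only cosmetic difference is that the paper passes the limit directly through the composed map $Q_{v,p_{v}}^{\omega}\cdots Q_{v,1}^{\omega}$, whereas you run the equivalent induction edge by edge.
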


\begin{proof}
We have by Theorem \ref{Th:omega-c} that $\vec{x}(\omega) = \vec{x}^{M}$ for every $\omega \in (0,2)$.  For a node $v$, let the path from the root $r$ to $v$ be denoted by $r = (v, 1), \dots, (v, p_{v}) = v$, where $p_{v} - 1$ is the length of the path.  Then, we have that
\begin{equation*}
 \lim_{n \to \infty} \vec{x}_{v}^{(n)} = \lim_{n \to \infty} Q_{v,p_{v}}^{\omega} \cdots Q_{v,1}^{\omega} \vec{x}^{(n)} = Q_{v,p_{v}}^{\omega} \cdots Q_{v,1}^{\omega} \vec{x}(\omega) = \vec{x}^{M}.
\end{equation*}
This holds as a consequence of the fact that any solution to the system of equations is fixed by $Q_{(\cdot)}^{\omega}$. 

Since we have that $\vec{y}_{v}^{(n)}$ is a convex combination of the vectors $\vec{x}_{\ell}^{(n)}$, which all converge to $\vec{x}(\omega)$, we have that $\vec{y}_{v} = \vec{x}^{M}$ also.
\end{proof}

\begin{corollary}
If the system of equations $A \vec{x} = \vec{b}$ is inconsistent, then for every node $v$ and every $\omega \in (0,2)$, the limits $\vec{x}_{v}$ and $\vec{y}_{v}$ as in Equation (\ref{Eq:vertex-limit}) exist and depend on $\omega$.  Moreover, we have
\begin{equation*}
\lim_{\omega \to 0} \vec{x}_{v}(\omega) = \vec{x}^{LS} \qquad \lim_{\omega \to 0} \vec{y}_{v}(\omega) = \vec{x}^{LS},
\end{equation*}
where $\vec{x}^{LS}$ is the vector as in Theorem \ref{Th:omega-ic}.
\end{corollary}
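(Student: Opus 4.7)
The plan is to mirror the structure of the previous corollary (consistent case), with the crucial difference that the global limit $\vec{x}(\omega)$ now depends on $\omega$ and the operators $\mathcal{Q}_v^\omega$ no longer fix $\vec{x}(\omega)$. The existence of the per-node limits will follow from continuity of the dispersion/pooling maps composed with the already-established root convergence (Corollary \ref{C:Fixed}), and the $\omega\to 0$ limit will follow from Theorem \ref{Th:omega-ic} together with the fact that $Q_v^\omega \to I$ as $\omega\to 0$.

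First I would establish existence of $\vec{x}_v(\omega)$. Let $r=(v,1),\ldots,(v,p_v)=v$ be the path from the root to $v$ and set $\mathcal{Q}_v^\omega := Q_{v,p_v}^\omega\cdots Q_{v,1}^\omega$. By Lemma \ref{L:Q-Lip} (applied along this path) $\mathcal{Q}_v^\omega$ is Lipschitz with constant $1$, hence continuous, and $\vec{x}_v^{(n)} = \mathcal{Q}_v^\omega \vec{x}^{(n)}$. By Corollary \ref{C:Fixed}, $\vec{x}^{(n)} \to \vec{x}(\omega)$, so continuity yields $\vec{x}_v^{(n)} \to \mathcal{Q}_v^\omega\vec{x}(\omega) =: \vec{x}_v(\omega)$. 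For $\vec{y}_v^{(n)}$, a straightforward induction (analogous to Lemma \ref{L:pooling}) shows that $\vec{y}_v^{(n)} = \sum_{\ell \succeq v} w(\ell,v)\vec{x}_\ell^{(n)}$, where $w(\ell,v)$ are fixed nonnegative weights summing to $1$. Since each $\vec{x}_\ell^{(n)} \to \vec{x}_\ell(\omega)$, the limit $\vec{y}_v(\omega) = \sum_{\ell\succeq v} w(\ell,v)\vec{x}_\ell(\omega)$ exists.

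Second, for the $\omega\to 0$ limit, I would apply the triangle inequality
\begin{equation*}
\|\vec{x}_v(\omega) - \vec{x}^{LS}\| \;\le\; \|\mathcal{Q}_v^\omega \vec{x}(\omega) - \mathcal{Q}_v^\omega \vec{x}^{LS}\| + \|\mathcal{Q}_v^\omega \vec{x}^{LS} - \vec{x}^{LS}\|.
\end{equation*}
The first term is bounded by $\|\vec{x}(\omega)-\vec{x}^{LS}\|$ (Lipschitz constant $1$), which tends to $0$ by Theorem \ref{Th:omega-ic}. For the second term, I would use the identity $Q_{v,j}^\omega \vec{z} - \vec{z} = \omega(Q_{v,j}\vec{z}-\vec{z})$ and a telescoping argument on the composition $\mathcal{Q}_v^\omega$ to get a bound of the form $\|\mathcal{Q}_v^\omega\vec{z}-\vec{z}\| \le C_v\,\omega\,(\|\vec{z}\|+1)$ for some constant $C_v$ depending only on the path. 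Applied at $\vec{z}=\vec{x}^{LS}$, this shows $\mathcal{Q}_v^\omega\vec{x}^{LS}\to \vec{x}^{LS}$ as $\omega\to 0$, completing the proof for $\vec{x}_v$. For $\vec{y}_v$, the convex combination representation from the first paragraph gives $\vec{y}_v(\omega) = \sum_{\ell\succeq v} w(\ell,v)\vec{x}_\ell(\omega) \to \sum_{\ell\succeq v} w(\ell,v)\vec{x}^{LS} = \vec{x}^{LS}$.

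The mild obstacle is the telescoping estimate for $\|\mathcal{Q}_v^\omega\vec{z}-\vec{z}\|$: one must write $\mathcal{Q}_v^\omega - I$ as a sum of $p_v$ terms, each containing one factor of the form $Q_{v,j}^\omega - I = \omega(Q_{v,j}-I)$ preceded by a product of Lipschitz-$1$ maps, and then bound the result using the fact that each $Q_{v,j}$ is an affine contraction toward an affine subspace (so $\|Q_{v,j}\vec{z}-\vec{z}\|$ is controlled by $\|\vec{z}\|+\|\vec{h}_{v,j}\|$). Beyond this bookkeeping the argument is routine, and the dependence on $\omega$ in the inconsistent case is immediate from Theorem \ref{Th:omega-ic} (which asserts $\vec{x}(\omega)\ne \vec{x}^{LS}$ in general).
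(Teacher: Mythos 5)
Your argument is correct and follows essentially the same route as the paper: existence of $\vec{x}_v(\omega)$ by applying the (Lipschitz, hence continuous) node map to the root limit from Corollary \ref{C:Fixed}, the $\omega \to 0$ limit by combining Theorem \ref{Th:omega-ic} with the fact that the node map tends to the identity, and the convex-combination argument for $\vec{y}_v$. The only difference is bookkeeping: the paper writes the node map in SOR form $\vec{x}_v^{(n)} = \mathcal{B}_v^{\omega}\vec{x}^{(n)} + \vec{\textbf{b}}_v^{\omega}$ and reads off $\mathcal{B}_v^{\omega} \to I$, $\vec{\textbf{b}}_v^{\omega} \to 0$ directly from the explicit formulas, whereas you obtain the same conclusion from a telescoping estimate on the composition $Q_{v,p_v}^{\omega}\cdots Q_{v,1}^{\omega}$, which is valid but slightly more work.
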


\begin{proof}
We apply the SOR analysis of $\vec{x}_{v}^{(n)} = Q_{(v,p_{v})}^{\omega} \cdots Q_{(v,1)}^{\omega} \vec{x}^{(n)}$ with input $\vec{x}^{(n)}$ to obtain
\begin{equation*}
\vec{x}_{v}^{(n)} = \mathcal{B}_{v}^{\omega} \vec{x}^{(n)} + \vec{\textbf{b}}_{v}^{\omega}
\end{equation*}
where $\mathcal{B}_{v}^{\omega}$ and $\vec{\textbf{b}}_{v}^{\omega}$ are analogous to those in Equation (\ref{Eq:SOR-ell}).  Taking limits on $n$, we obtain
\begin{equation*}
\vec{x}_{v}(\omega) = \mathcal{B}_{v}^{\omega} \vec{x}(\omega) + \vec{\textbf{b}}_{v}^{\omega}.
\end{equation*}
Since, as $\omega \to 0$, we have that $\mathcal{B}_{v}^{\omega} \to I$, $\vec{\textbf{b}}_{v}^{\omega} \to 0$, and $\vec{x}(\omega) \to \vec{x}^{LS}$ , we obtain
\begin{equation*}
\lim_{\omega \to 0} \vec{x}_{v}(\omega) = \vec{x}^{LS}.
\end{equation*}

As previously, $\vec{y}_{v}(\omega)$ is a convex combination of $\vec{x}_{\ell}(\omega)$, so $\vec{y}_{v}(\omega) \to \vec{x}^{LS}$ as $\omega \to 0$ also.
\end{proof}



\subsection{Error Analysis}

We consider the question of how errors propagate through the iterations of the dispersion and pooling stages.  We model errors as additive; the sources of errors could be machine errors, transmission errors, errors from compression to reduce communication complexity, etc.  Additive errors then take on the form
\begin{equation} \label{Eq:additive}
\vec{x}_{v,e}^{(n)} = \vec{x}_{v}^{(n)} + \epsilon_{v}^{(n)}; \qquad \vec{y}_{v,e}^{(n)} = \vec{y}_{v}^{(n)} + \delta_{v}^{(n)}
\end{equation}
Here, $\vec{x}_{v,e}^{(n)}$ and $\vec{y}_{v,e}^{(n)}$ are the error-riddled estimates which are passed to the successor (or predecessor) nodes in the dispersion (or pooling) stage, respectively, with additive errors $\epsilon_{v}^{(n)}$ and $\delta_{v}^{(n)}$.  We trace the errors during the dispersion stage as follows:  for vertex $v$ on a path between the root $r$ and leaf $\ell$, and the path parameterized by $r = (\ell, 1) , \dots, (\ell, p_{\ell}) = \ell$, suppose that $v = (\ell, k)$.  Then, the error introduced at vertex $v$ (with errors introduced at no other vertex) results in the estimate
\begin{align}
\vec{x}_{\ell,e}^{(n)} &= Q_{\ell,p_{\ell}}^{\omega} \cdots Q_{\ell, k+1}^{\omega}( \vec{x}_{v}^{(n)} + \epsilon_{v}^{(n)})  \notag \\
&= Q_{\ell,p_{\ell}}^{\omega} \cdots Q_{\ell, k+1}^{\omega}( \vec{x}_{v}^{(n)} ) + \widetilde{\epsilon}_{v,\ell}^{(n)} \label{Eq:error-1} \\ 
&= \mathcal{Q}_{\ell}^{\omega}( \vec{x}^{(n)} ) + \widetilde{\epsilon}_{v,\ell}^{(n)}. \notag
\end{align}
Equation (\ref{Eq:error-1}) follows for some $\widetilde{e}_{v}^{(n)}$ since the $Q_{(\cdot)}^{\omega}$ are affine transformations.  We have that 
\begin{equation*}
\|  \widetilde{\epsilon}_{v}^{(n)} \| = \| Q_{\ell,p_{\ell}}^{\omega} \cdots Q_{\ell, k+1}^{\omega}( \vec{x}_{v}^{(n)} + \epsilon_{v}^{(n)}) - Q_{\ell,p_{\ell}}^{\omega} \cdots Q_{\ell, k+1}^{\omega}( \vec{x}_{v}^{(n)} ) \| \leq \| \epsilon_{v}^{(n)} \|
\end{equation*}
since the $Q_{(\cdot)}^{\omega}$ have Lipschitz constant $1$.  The additive errors $\delta_{v}^{(n)}$ simply sum in the pooling stage, and thus we calculate the total errors from iteration $n$ to iteration $n+1$.
\begin{lemma}
Suppose we have additive errors as in Equation (\ref{Eq:additive}) introduced in iteration $n$.  Suppose no errors were introduced in previous iterations.  Then the estimate after iteration $n$ is:
\begin{equation}
\vec{x}_{e}^{(n+1)} = \vec{x}^{(n+1)} + \sum_{v} \sum_{\substack{ \ell \in \mathcal{L}  \\ \ell \prec v } } w(\ell, r) \tilde{e}_{v}^{(n)} + \sum_{v} w(v,r) \delta_{v,e}^{(n)}.
\end{equation}
The magnitude of the error is bounded by:
\begin{equation}
\| \vec{x}_{e}^{(n+1)} - \vec{x}^{(n+1)} \| \leq K \text{ max }\{ \| \epsilon_{v}^{(n)} \|, \| \delta_{v}^{(n)} \| \} 
\end{equation}
where $K$ is $2$ times the depth of the tree.
\end{lemma}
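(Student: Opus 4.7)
The plan rests on two structural facts already in hand: by Lemma~\ref{L:Q-Lip} each $Q_v^{\omega}$ is an affine map with Lipschitz constant $1$, and the pooling rule~(\ref{Eq:simple-back}) is a convex (hence linear) combination of its inputs. Together these let me superpose errors introduced at different nodes and bound each contribution cleanly.

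First I would handle the dispersion stage. For a node $v = (\ell,k)$ on the path from the root to leaf $\ell$, an additive error $\epsilon_v^{(n)}$ is pushed to $\ell$ through the composition $Q_{\ell,p_\ell}^{\omega} \cdots Q_{\ell,k+1}^{\omega}$; since an affine map sends $\vec{z}+\epsilon$ to its value at $\vec{z}$ plus the image of $\epsilon$ under the linear part, this produces an additive leaf-level perturbation $\widetilde{\epsilon}_{v,\ell}^{(n)}$ of norm at most $\|\epsilon_v^{(n)}\|$, exactly as in the computation displayed just before the lemma. Because all relevant maps are affine, the perturbations arising from different injection sites on the same path combine by superposition, so the total dispersion-induced perturbation at leaf $\ell$ is $\sum_{v:\,\ell \prec v} \widetilde{\epsilon}_{v,\ell}^{(n)}$.

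Next I would handle the pooling stage. Equation~(\ref{Eq:simple-back}) is linear in the inputs from the children, so a $\delta_v^{(n)}$ injected at node $v$ propagates to the root by multiplication by the product of weights along the path $v \to r$, which by definition~(\ref{Eq:weight-path}) equals $w(v,r)$. In parallel, each leaf-level perturbation $\widetilde{\epsilon}_{v,\ell}^{(n)}$ is carried to the root with weight $w(\ell,r)$ by the same induction that proves Lemma~\ref{L:pooling}. Collecting terms and swapping the order of summation so that the outer sum ranges over the node $v$ where the dispersion error originated yields the displayed identity for $\vec{x}_e^{(n+1)}$.

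For the norm bound I would apply the triangle inequality together with $\|\widetilde{\epsilon}_{v,\ell}^{(n)}\| \leq \|\epsilon_v^{(n)}\|$, and set $M = \max_v \max\{\|\epsilon_v^{(n)}\|, \|\delta_v^{(n)}\|\}$. For the pooling contribution, I would use the fact, provable by induction on depth from $\sum_{u \in \mathcal{C}(v)} w(u,v) = 1$, that $\sum_{v \text{ at depth } d} w(v,r) = 1$ at each depth, whence $\sum_v w(v,r)$ is bounded by the depth of the tree. For the dispersion contribution, swapping the order of summation gives $\sum_\ell w(\ell,r)\,|\{v : \ell \prec v\}|$, which is similarly bounded by the depth since $\sum_\ell w(\ell,r) = 1$ by Equation~(\ref{Eq:total-weights}). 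Adding the two contributions produces a constant $K$ equal to twice the depth of the tree, as claimed. The only real subtlety I foresee is the bookkeeping of the double sums and the commutation of summation orders; everything substantive is dictated immediately by affinity of $Q_v^{\omega}$, linearity of pooling, and these two weight identities, so this is a routine concern rather than a deep obstacle.
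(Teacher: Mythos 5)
Your argument is correct and is essentially the one the paper intends: the displayed computation before the lemma (affinity and Lipschitz constant $1$ of the $Q^{\omega}_{(\cdot)}$ maps for the dispersion errors, linearity of the pooling rule for the $\delta$'s) together with the weight identities is exactly the paper's implicit proof, and you have filled in the bookkeeping correctly. The only nitpick is that $\sum_{v\text{ at depth }d} w(v,r)$ equals $1$ only when no leaf occurs above depth $d$ and is $\leq 1$ in general, but that inequality is all your bound needs.
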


We write
\begin{equation} \label{Eq:total-error}
E^{(n)} = \sum_{v} \sum_{\substack{ \ell \in \mathcal{L}  \\ \ell \prec v } } w(\ell, r) \tilde{e}_{v}^{(n)} + \sum_{v} w(v,r) \delta_{v,e}^{(n)}.
\end{equation}

\begin{theorem}
If the additive errors in Equation (\ref{Eq:additive}) are uniformly bounded by $M$, and the system of equations $A \vec{x} = \vec{b}$ has a unique solution.  
Then the sequence of approximations $\{\vec{x}_{e}^{(n)} \}$ has the property that
\begin{equation} \label{Eq:error-est}
\limsup_{n \to \infty} \| \vec{x}(\omega) - \vec{x}_{e}^{(n)} \| \leq  \dfrac{2 K M}{1 - \rho (\mathcal{B}^{\omega} ) }
\end{equation}
where $K$ is the depth of the tree.
\end{theorem}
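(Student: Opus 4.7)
The plan is to reduce the error analysis to a perturbed linear recursion governed by $\mathcal{B}^\omega$, and then to exploit $\rho(\mathcal{B}^\omega) < 1$ from Theorem \ref{Th:omega} to sum a geometric series of errors. First I would observe that since the full iteration map $\vec{z} \mapsto \mathcal{B}^\omega \vec{z} + \vec{\textbf{b}}^\omega$ is affine, the previous lemma (which was phrased for a single noisy iteration on a clean input) extends by linearity to
\begin{equation*}
\vec{x}_e^{(n+1)} = \mathcal{B}^\omega \vec{x}_e^{(n)} + \vec{\textbf{b}}^\omega + E^{(n)},
\end{equation*}
where $E^{(n)}$ is the aggregate error from iteration $n$ as in Equation (\ref{Eq:total-error}). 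Subtracting the fixed-point identity for $\vec{x}(\omega)$ (noting that the uniqueness hypothesis collapses $\vec{x}(\omega)$ to the unique solution $\vec{x}^S$, independent of $\omega$) and setting $\vec{e}^{(n)} := \vec{x}(\omega) - \vec{x}_e^{(n)}$ yields the clean linear recursion
\begin{equation*}
\vec{e}^{(n+1)} = \mathcal{B}^\omega \vec{e}^{(n)} - E^{(n)}.
\end{equation*}

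Next I would unroll this to
\begin{equation*}
\vec{e}^{(n)} = (\mathcal{B}^\omega)^n \vec{e}^{(0)} - \sum_{k=0}^{n-1} (\mathcal{B}^\omega)^{n-1-k} E^{(k)}
\end{equation*}
and bound each piece. Because the system has a unique solution, $\mathcal{N}(A) = \{0\}$, so $\mathcal{R}(\mathcal{S}^*) = \mathbb{C}^d$ and $\widehat{\mathcal{B}}^\omega = \mathcal{B}^\omega$ acts on the full space; Theorem \ref{Th:omega} therefore gives $\rho(\mathcal{B}^\omega) < 1$ globally. In particular $(\mathcal{B}^\omega)^n \vec{e}^{(0)} \to 0$, so this term drops out of the limit superior. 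For the driving term I would invoke the previous lemma to get $\|E^{(k)}\| \leq 2KM$ (where $K$ is the depth of the tree, matching the $2 \times \text{depth}$ constant from that lemma), so that the triangle inequality yields
\begin{equation*}
\limsup_{n \to \infty} \|\vec{e}^{(n)}\| \leq 2KM \sum_{j=0}^\infty \|(\mathcal{B}^\omega)^j\|.
\end{equation*}

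The hard part will be turning the spectral radius bound into the geometric-series estimate $\sum_{j=0}^\infty \|(\mathcal{B}^\omega)^j\| \leq (1 - \rho(\mathcal{B}^\omega))^{-1}$ in the Euclidean norm, since Lemma \ref{L:B-Lip} only furnishes $\|\mathcal{B}^\omega\| \leq 1$ and no stronger contractivity is immediately available at the operator-norm level. My plan is to invoke the standard consequence of Gelfand's formula: for every $\epsilon > 0$ there is an equivalent norm $\|\cdot\|_\epsilon$ on $\mathbb{C}^d$ with $\|\mathcal{B}^\omega\|_\epsilon \leq \rho(\mathcal{B}^\omega) + \epsilon$, in which the geometric series estimate applies directly. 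Transferring the bound back to the Euclidean norm via norm equivalence and letting $\epsilon \to 0$ then produces (\ref{Eq:error-est}) as the desired asymptotic estimate.
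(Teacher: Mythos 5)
Your proposal follows essentially the same route as the paper's proof: write the noisy iteration as the exact affine iteration driven by the aggregate per-iteration error $E^{(n)}$ of Equation (\ref{Eq:total-error}), unroll the recursion, bound $\|E^{(k)}\| \leq 2KM$ via the preceding lemma, use uniqueness of the solution to conclude $\rho(\mathcal{B}^{\omega})<1$, and sum a geometric series. Your observation that uniqueness gives $\mathcal{N}(A)=\{0\}$, hence $\mathcal{R}(\mathcal{S}^{*})=\mathbb{C}^{d}$ and $\widehat{\mathcal{B}}^{\omega}=\mathcal{B}^{\omega}$, so that Theorem \ref{Th:omega} applies on the whole space, is actually a cleaner justification of $\rho(\mathcal{B}^{\omega})<1$ than the paper's citation of the proof of Theorem \ref{Th:unique-soln}.

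The one step that will not close as you describe is precisely the one you flag as ``the hard part.'' The inequality $\sum_{j\geq 0}\|(\mathcal{B}^{\omega})^{j}\| \leq \bigl(1-\rho(\mathcal{B}^{\omega})\bigr)^{-1}$ in the Euclidean operator norm is false in general; it points the wrong way, since $\|(\mathcal{B}^{\omega})^{j}\| \geq \rho(\mathcal{B}^{\omega})^{j}$ for every $j$, so the left side always dominates the right, with equality only when $\|(\mathcal{B}^{\omega})^{j}\|=\rho(\mathcal{B}^{\omega})^{j}$ for all $j$ (e.g.\ the normal case). The Gelfand/equivalent-norm device does not repair this: choosing $\|\cdot\|_{\epsilon}$ with $\|\mathcal{B}^{\omega}\|_{\epsilon}\leq\rho(\mathcal{B}^{\omega})+\epsilon$ and transferring back to the Euclidean norm costs the equivalence ratio between the two norms, both on $E^{(k)}$ and on the left-hand side, and that ratio does not tend to $1$ as $\epsilon\to 0$. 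What this route actually yields is $\limsup_{n}\|\vec{x}(\omega)-\vec{x}^{(n)}_{e}\| \leq C\cdot 2KM/\bigl(1-\rho(\mathcal{B}^{\omega})\bigr)$ for a constant $C\geq 1$ depending on the conditioning of $\mathcal{B}^{\omega}$, not the bare constant asserted in (\ref{Eq:error-est}). In fairness, the paper's own proof is no more careful at this point---it bounds the tail by $\sum_{k}2KM\|\mathcal{B}^{\omega}\|^{k}$ ``for any matrix norm'' and passes to the spectral radius without tracking the change of vector norm---so your proposal reproduces the published argument, imprecision included; a fully rigorous version should either retain the norm-equivalence constant or note the additional hypothesis under which $C=1$.
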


\begin{proof}
We have
\begin{equation*}
\vec{x}_{e}^{(n)}  = \vec{x}^{(n)} + \sum_{k=1}^{n} \left( \mathcal{B}^{\omega} \right)^{n-k} E^{(k)}.
\end{equation*}
As noted previously, $\| E^{(k)} \| \leq 2 K M$, and if $A \vec{x} = \vec{b}$ has a unique solution, then $\rho (\mathcal{B}^{\omega} ) < 1$ (see proof of Theorem \ref{Th:unique-soln}).

Thus, for any matrix norm $\| \cdot \|$ with $\rho (\mathcal{B}^{\omega} ) < \| \mathcal{B}^{\omega} \|$
\begin{equation*}
\| \vec{x}^{(n)} - \vec{x}_{e}^{(n)} \| \leq \sum_{k=0}^{n-1} 2 K M \| \mathcal{B}^{\omega} \|^{k}
\end{equation*}
from which Equation (\ref{Eq:error-est}) follows.
\end{proof}

If the system of equations does not have a unique solution, then the mapping $ \mathcal{B}^{\omega} $ has $1$ as an eigenvalue, and so the parts of the errors that lie in that eigenspace accumulate.  Hence, no stability result is possible in this case.

\subsection{Extensions}

We present several possible extensions and variations that require only minor modifications to the proofs of Theorems \ref{Th:unique-soln}, \ref{Th:omega-c}, and \ref{Th:omega-ic}.

The first variation is when the nodes of the tree contain more than one equation from $A \vec{x} = \vec{b}$.  This can be easily modeled under the assumption that each node proceeds through its equations in some \emph{a priori} fixed linear ordering, and subsequently in the tree replacing each node with a path.  Again, the SOR analysis passes through unaltered.  Alternatives to fixed linear orderings in this situation will not be considered here.

The second variation is when the data for each node consists of linear transformations $T_{v} : H \to H_{v}$ rather than linear functionals $S_{v}: H \to \mathbb{C}$.  If we assume that at each node, $T_{v}$ is onto \cite{N-86}, then again the SOR analysis passes through unaltered, and so we will not consider this variation further here.

The third variation is to perform the Kaczmarz update during the pooling stage of the iteration.   This variation, however, requires more than minor modifications to the proofs, and will thus be considered elsewhere.

\section{Implementation and Examples} \label{S:examples}

For the standard Kaczmarz algorithm, it is well known that the method converges if and only if the relaxation parameter $\omega$ is in the interval $(0,2)$.  For our distributed Kaczmarz, the situation is not nearly as clear.  The proofs of Theorems \ref{Th:omega-c} and \ref{Th:omega-ic} require that $\omega \in (0,2)$, but in numerical experiments, convergence occurred for $\omega \in (0,\Omega)$ for some $\Omega \ge 2$. The largest $\Omega$ observed was around 3.8. The precise upper limit depends on the equations themselves.  In this section, we perform a preliminary analysis of the computation of $\Omega$ and the optimal $\omega_{opt}$ for a very simple setup, and give numerical results for several examples.

\subsection{Examples}
\begin{example}
We consider the matrix
\begin{equation*}
  A =
  \begin{pmatrix}
    -\sin \alpha & \cos \alpha \\
    0 & 1
  \end{pmatrix}.
\end{equation*}
In geometric terms, the Kaczmarz method for this example corresponds
to projection onto the $x$-axis and onto a line forming an angle
$\alpha$ with the $x$-axis.

For standard Kaczmarz, the iteration matrix is
\begin{equation*}
    B_\omega = I - \omega A^* (D+\omega L)^{-1} A =
    \begin{pmatrix}
      1 - \omega \sin^2 \alpha & \omega \sin \alpha \cos \alpha \\
      \omega (1 - \omega) \sin \alpha \cos \alpha & (1-\omega) (1 -
      \omega \cos^2 \alpha)
    \end{pmatrix}.
\end{equation*}
The eigenvalues are
\begin{equation*}
  \lambda = \left[ \frac{\omega^2 \cos^2 \alpha}{2} + (1 - \omega)
    \right] \pm \omega \cos\alpha \sqrt{(\omega-2)^2 - \omega^2 \sin^2 \alpha}.
\end{equation*}
For small $\omega$, the eigenvalues are real and decreasing as a
function of $\omega$. They become complex at
\begin{equation*}
  \omega_{opt} = \frac{2}{1+\sin \alpha},
\end{equation*}
which is between 1 and 2. After that point, both eigenvalues have
magnitude $\omega - 1$, and the spectral radius increases in a
straight line.  The dependence of $\rho$ on $\omega$ is illustrated
below in the left half of fig.~\ref{figure:example2}. Here $\alpha =
\pi/3$, $\omega_{opt} \approx 1.0718$, $\rho_{opt} \approx 0.0718$.

As pointed out in~\cite{N-86}, there is a strong connection between the
classical Kaczmarz method and Successive Over-Relaxation (SOR). In SOR
the relationship between $\omega$ and $\rho$ shows the same type of
behavior.

The example with two equations is too small to implement as
distributed Kaczmarz, but we consider something similar. We project
the same $\vec{x}^{(n)}$ onto each line, and average the result to get
$\vec{x}^{(n+1)}$. We will refer to this as the {\em averaged Kaczmarz
  method}.

The iteration matrix is
\begin{equation*}
  B_\omega =
  \begin{pmatrix}
    1 - \frac{\omega}{2} \sin^2 \alpha & \frac{\omega}{2} \sin \alpha
    \cos \alpha \\
    \frac{\omega}{2} \sin \alpha \cos \alpha & \frac{\omega}{2} \sin^2
    \alpha - \omega + 1.
  \end{pmatrix}
\end{equation*}
The eigenvalues here are always real and vary linearly with $\omega$, namely
\begin{equation*}
  \lambda_{1,2} = 1 + \frac{\omega}{2} \left( \pm \cos \alpha - 1 \right).
\end{equation*}
They both have the value 1 at $\omega = 0$, and are both
decreasing with increasing $\omega$. The first one reaches $(-1)$ at
\begin{equation*}
  \Omega = \frac{4}{1 + \cos \alpha}.
\end{equation*}
Thus, the upper limit $\Omega$ is somewhere between 2 and 4, depending
on $\alpha$. In numerical experiments with the distributed Kaczmarz
method for larger matrices, we have observed $\Omega$ near 4, but
never above 4. We conjecture that $\Omega$ can never be larger than 4.

The minimum spectral radius occurs at $\omega_{opt} = 2$, independent
of $\alpha$, with $\rho_{opt} = \cos\alpha$. The dependence of $\rho$
on $\omega$ is illustrated below in the left half of
fig.~\ref{figure:example2}. In this example, the graph for the
averaged Kaczmarz method consists of two line segments, with
$\omega_{opt} = 2$, $\rho_{opt} = 0.5$.

Figure~\ref{figure:example1} illustrates the optimal $\omega$ for
$\alpha = \pi/2$.  The optimal $\omega$ for standard
Kaczmarz is $\omega = 1$, with $\rho = 0$. Convergence occurs in a
single step. For the averaged method, the optimal $\omega$ is 2, where
again convergence occurs in a single step. The averaged method would
still converge for a range of $\omega > 2$.

\begin{figure}[h]
  \centering
  \includegraphics[height=2.5in]{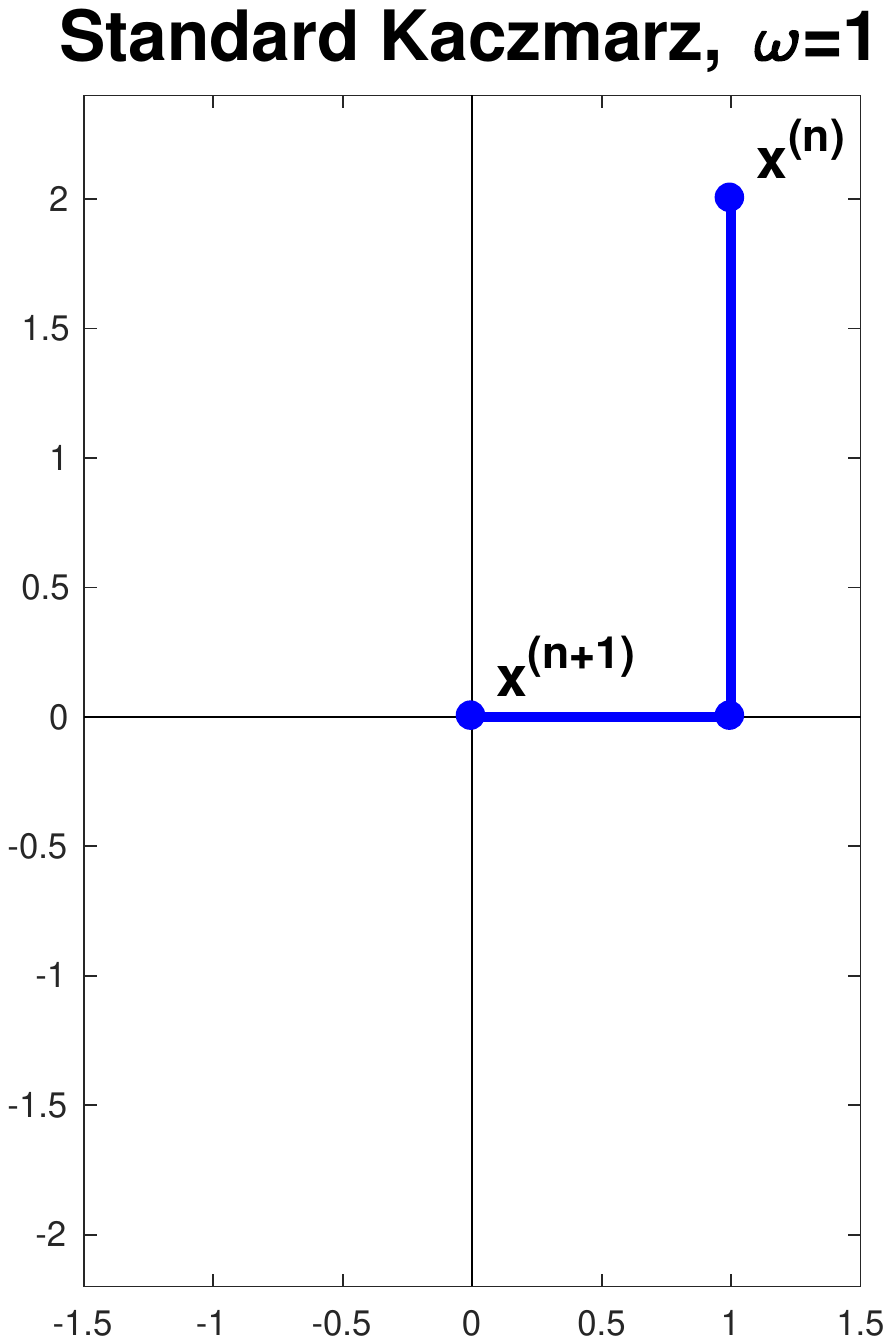} \quad
  \includegraphics[height=2.5in]{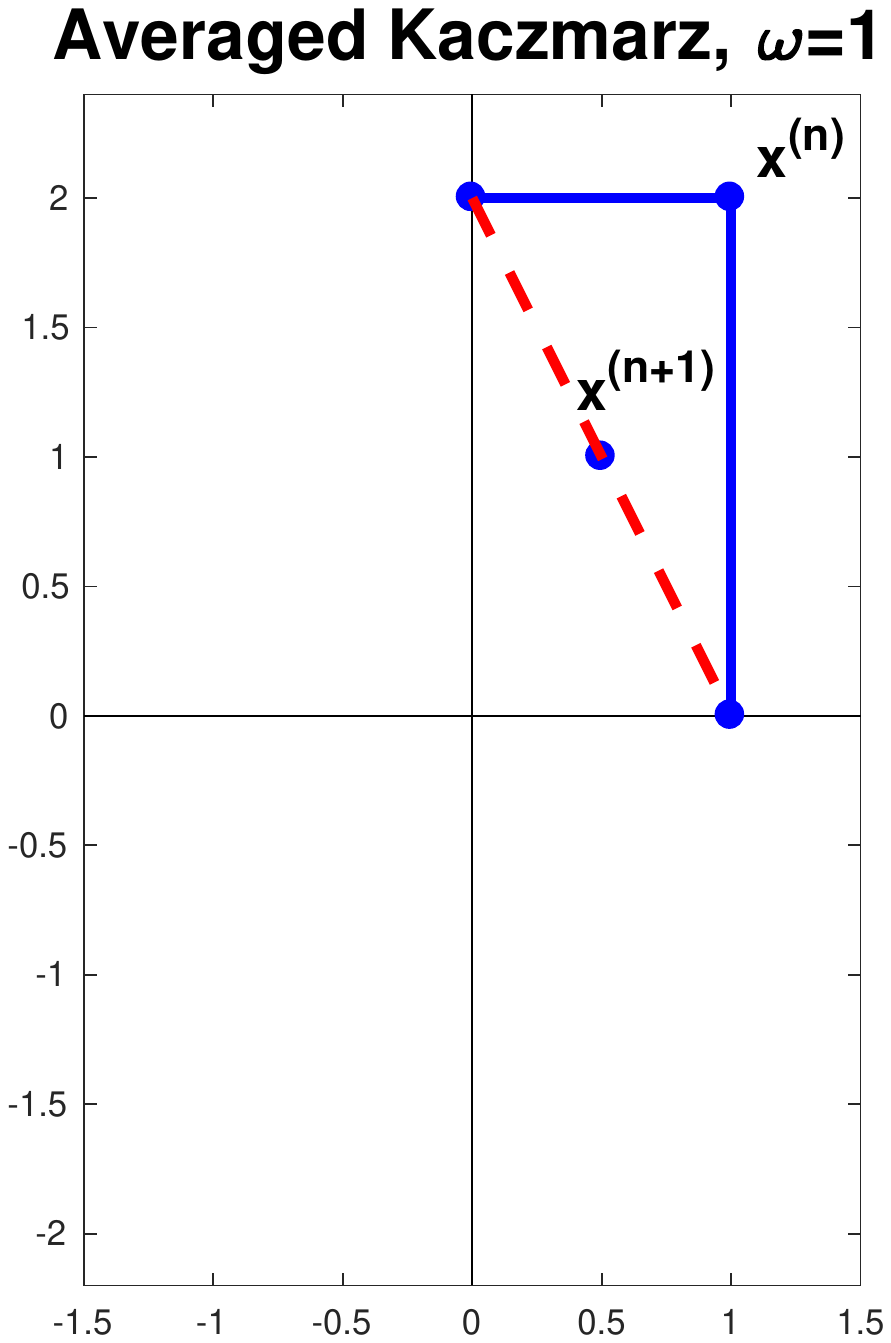} \quad
  \includegraphics[height=2.5in]{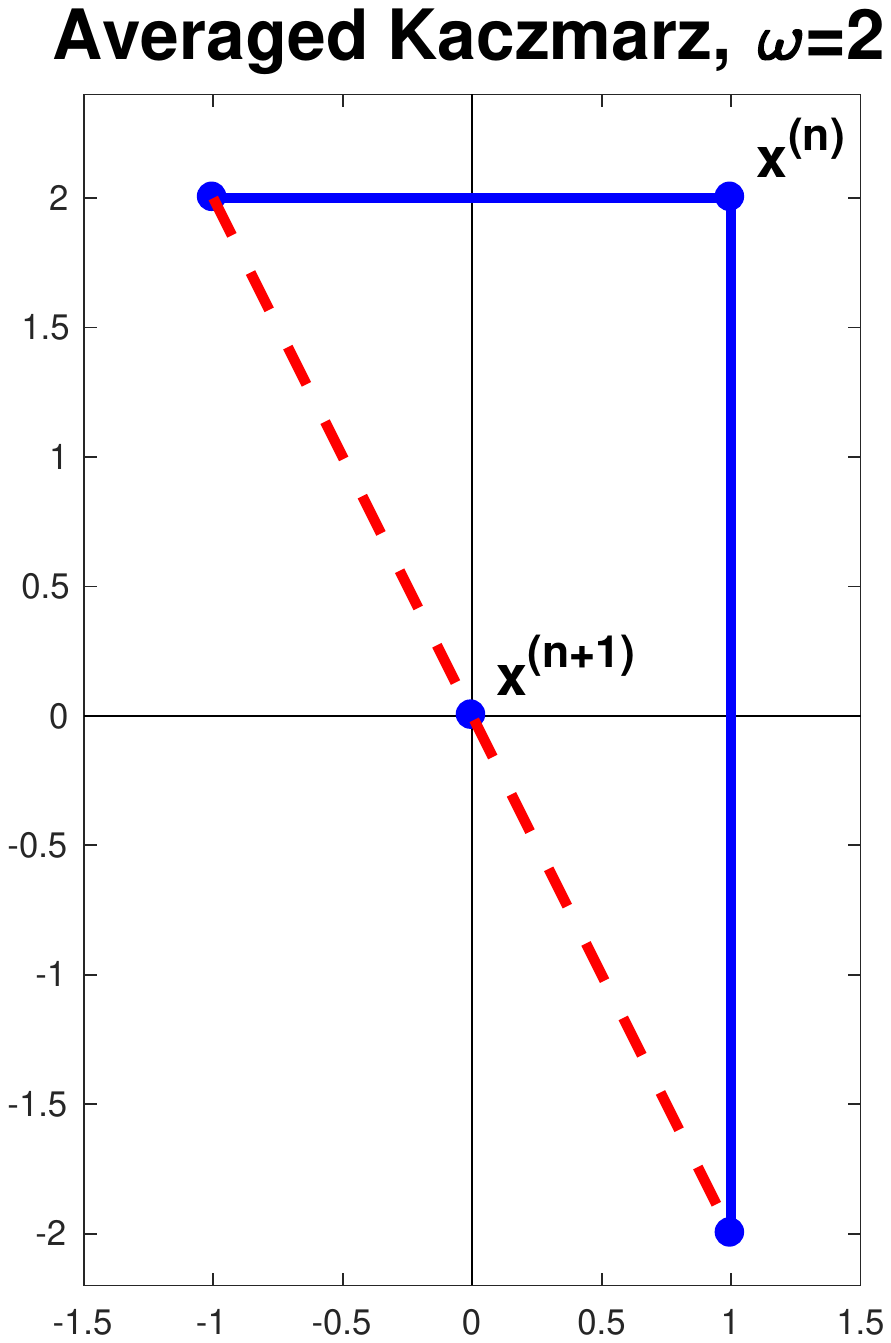}
  \caption{Example 1 with $\alpha = \pi/2$. The pictures show one step
    of standard Kaczmarz with $\omega=1$, and one step of averaged Kaczmarz
    for $\omega=1$ and $\omega=2$. This illustrates the need for a
    larger $\omega$ in the averaged Kaczmarz method.}
  \label{figure:example1}
\end{figure}  

Numerical experiments with larger sets of equations indicate that the
optimal $\omega$ for classical Kaczmarz is usually larger than 1, but
of course cannot exceed 2. The optimal $\omega$ for distributed
Kaczmarz is usually larger than 2, sometimes even approaching 4.
 \end{example}
  
\begin{example} \label{subsec:example2}

We used a random matrix of size $8 \times 8$, with entries generated
using a standard normal distribution. For the distributed Kaczmarz
method, we used the 8-node graph as shown on the right in Figure \ref{fig:graphs}.

For the standard Kaczmarz method, the optimal relaxation parameter was
$\omega_{opt} \approx 1.7354$, with spectral radius $\rho_{opt}
\approx 0.93147$. For the distributed Kaczmarz method, the results
were $\omega_{opt} \approx 3.7888$, with spectral radius $\rho_{opt}
\approx 0.99087$. This is illustrated in on the right in figure~\ref{figure:example2}.

\begin{figure}[h]
  \centering
  \includegraphics[width=2.5in]{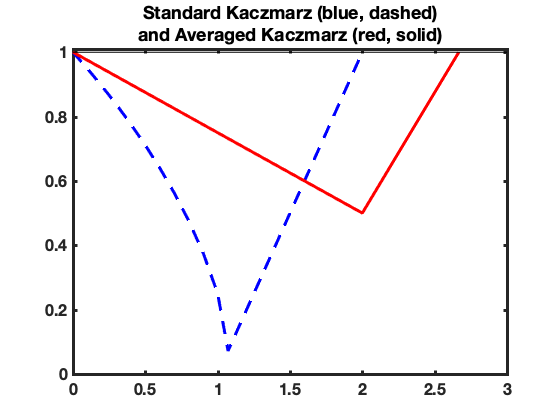} \quad
  \includegraphics[width=2.5in]{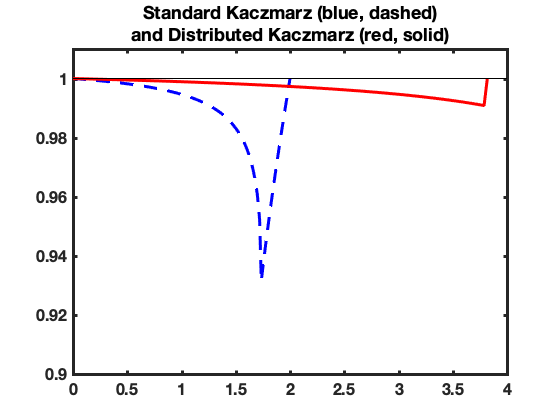}
  \caption{Dependence of the spectral radius $\rho$ of the iteration
    matrix on the relaxation parameter $\omega$. The left graph shows
    example 1 with $\alpha = \pi/3$. The right graph shows example 2.}
  \label{figure:example2}
\end{figure}  
\end{example}

\subsection{Implementation}

The implementation of the distributed Kaczmarz algorithm is based on
the Matlab Graph Theory toolbox. This toolbox provides support for
standard graphs and directed graphs (digraphs), weighted or
unweighted. We are using a weighted digraph.
The graph is defined by specifying the edges, which automatically also
defines the nodes. Specifying nodes is only necessary if there are
additional isolated nodes.
Both nodes and edges can have additional properties attached to them.
We take advantage of that by storing the equations and right-hand
sides, as well as the current approximate solution, in the nodes. The
weights are stored in the edges.
We are currently only considering tree-structured graphs. One node is
the root.  Each node other than the root has one incoming edge, coming
from the predecessor, and zero or more outgoing edges leading to the
successors. A node without a successor is called a {\em leaf}.

The basic Kaczmarz step has the form {\tt x\_new =
  update\_node(node,omega,x)}. The graph itself is a global data
structure, accessible to all subroutines; it would be very inefficient
to pass it as an argument every time.

The {\tt update\_node} routine does the following:

\begin{itemize}
    \item Use the equation(s) in the node to update {\tt x}
    \item Execute the {\tt update\_node} routine for each successor node
    \item Combine the results into a new {\tt x}, using the weights
      stored in the outgoing edges
    \item Return {\tt x\_new}
\end{itemize}

This routine needs to be called only once per iteration, for the
root. It will traverse the entire tree recursively.

\subsection{Numerical Experiments}
\label{sec:numerical}

We illustrate the methods with some simple numerical experiments. All
experiments were run with three different nonsingular matrices each,
of sizes $3 \times 3$ and $8 \times 8$.  All matrices were randomly generated once, and
then stored.  The right-hand size vectors are also random, and scaled
so that the true solution has $L^2$-norm 1. The test matrices are
\begin{itemize}
  \item An almost orthogonal matrix, generated from a random
    orthogonal matrix by truncating to one decimal of accuracy
  \item A random matrix, based on a standard normal distribution
  \item A random matrix, based on a uniform distribution in $[-1,1]$
\end{itemize}
In each case, we used the optimal $\omega$, based on minimizing the
spectral radius of the iteration matrix numerically. 
The distributed Kaczmarz method used the graphs shown in Figure \ref{fig:graphs}.  
Results are shown in Tables \ref{table:results3} and \ref{table:results8}.
In all cases, we start with $\boldx_0 = \boldzero$, so the initial
$L^2$-error is $e_0 = 1$. $e_{10}$ refers to the error after 10
iteration steps.
For an orthogonal matrix, the standard Kaczmarz method converges in a
single step. It is not surprising that it performs extremely well for
the almost orthogonal matrices.

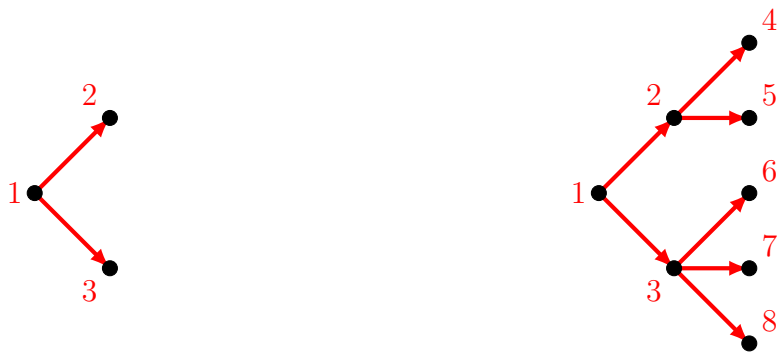
\begin{figure}[h]
  \centering
  \begin{tikzpicture}
    \coordinate (Origin)   at (0,0);

   \draw [ultra thick,-latex,red] (0,0) node [left] {$1$}
        -- (1,1) node [above left]  {$2$ };
   \draw [ultra thick,-latex,red] (0,0)
        -- (1,-1) node [below left] {$3$};

   \node[draw,circle,inner sep=2pt,fill] at (0,0) {};
   \node[draw,circle,inner sep=2pt,fill] at (1,1) {};
   \node[draw,circle,inner sep=2pt,fill] at (1,-1) {};



\begin{scope}[xshift=7.5cm,yshift=0.0cm]
    \coordinate (Origin)   at (0,0);

   \draw [ultra thick,-latex,red] (0,0) node [left] {1}
        -- (1,1) node [above left] {2};
   \draw [ultra thick,-latex,red] (0,0)
        -- (1,-1) node [below left] {3};
   \draw [ultra thick,-latex,red] (1,1) 
        -- (2,2) node [above right] {4};
   \draw [ultra thick,-latex,red] (1,1)
        -- (2,1) node [above right] {5};
   \draw [ultra thick,-latex,red] (1,-1)
        -- (2,0) node [above right] {6};
   \draw [ultra thick,-latex,red] (1,-1)
        -- (2,-1) node [above right] {7};
   \draw [ultra thick,-latex,red] (1,-1) 
        -- (2,-2) node [above right] {8};


   \node[draw,circle,inner sep=2pt,fill] at (0,0) {};
   \node[draw,circle,inner sep=2pt,fill] at (1,1) {};
   \node[draw,circle,inner sep=2pt,fill] at (1,-1) {};
   \node[draw,circle,inner sep=2pt,fill] at (2,2) {};
   \node[draw,circle,inner sep=2pt,fill] at (2,1) {};
   \node[draw,circle,inner sep=2pt,fill] at (2,-1) {};
   \node[draw,circle,inner sep=2pt,fill] at (2,0) {};
   \node[draw,circle,inner sep=2pt,fill] at (2,-2) {};

\end{scope}



 \end{tikzpicture}
  \caption{The two graphs used in numerical experiments with the distributed Kaczmarz method.}
  \label{fig:graphs}
\end{figure}


\begin{table}[h]
  \begin{center}
    \begin{tabular}{l|ccc|ccc|}
      & \multicolumn{3}{c|}{Standard Kaczmarz} &
      \multicolumn{3}{c|}{Distributed Kaczmarz} \\
      & $\omega_{opt}$ & $\rho_{opt}$ & $e_{10}$ & $\omega_{opt}$ & $\rho_{opt}$ & $e_{10}$ \\
      \hline
      orthogonal & 1.00030 & 0.00294 &                      0 & 1.33833 & 0.33753 & $1.5974 \cdot 10^{-5}$ \\
      normal     & 1.07213 & 0.20188 & $1.2793 \cdot 10^{-6}$ & 1.82299 & 0.29611 & $7.2461 \cdot 10^{-6}$ \\
      uniform    & 1.18634 & 0.37073 & $9.0922 \cdot 10^{-4}$ & 1.92714 & 0.82562 & $1.49608 \cdot 10^{-1}$ \\
      \hline
    \end{tabular}
  \end{center} 
  \caption{Numerical results for a $3 \times 3$ system of equations.}
  \label{table:results3}
\end{table}

\begin{table}[h]
  \begin{center}
    \begin{tabular}{l|ccc|ccc|}
      & \multicolumn{3}{c|}{Standard Kaczmarz} &
      \multicolumn{3}{c|}{Distributed Kaczmarz} \\
      & $\omega_{opt}$ & $\rho_{opt}$ & $e_{10}$ & $\omega_{opt}$ & $\rho_{opt}$ & $e_{10}$ \\
      \hline
      orthogonal & 1.01585 & 0.04931 & $1.53  \cdot 10^{-13}$ & 1.76733 & 0.73919 & $2.6757 \cdot 10^{-2}$ \\
      normal     & 1.73543 & 0.93147 & $8.5663 \cdot 10^{-1}$ & 3.78883 & 0.99087 & $9.0960 \cdot 10^{-1}$ \\
      uniform    & 1.88188 & 0.92070 & $7.1463 \cdot 10^{-1}$ & 3.73491 & 0.99890 & $7.7508 \cdot 10^{-1}$ \\
      \hline
    \end{tabular}
  \end{center}
  \caption{Numerical results for an $8 \times 8$ system of equations.}
  \label{table:results8}
\end{table}

\bigskip

\noindent \textbf{Acknowledgements.}  This research was supported by the National Science Foundation and the National Geospatial-Intelligence Agency under awards DMS-1830254 and CCF-1750920.


\newcommand{\nic}[1]{}
\providecommand{\bysame}{\leavevmode\hbox to3em{\hrulefill}\thinspace}
\providecommand{\MR}{\relax\ifhmode\unskip\space\fi MR }
\providecommand{\MRhref}[2]{%
  \href{http://www.ams.org/mathscinet-getitem?mr=#1}{#2}
}
\providecommand{\href}[2]{#2}



\end{document}